\renewcommand\footnotetextcopyrightpermission[1]{} 
  \providecommand\BibTeX{{%
    \normalfont B\kern-0.5em{\scshape i\kern-0.25em b}\kern-0.8em\TeX}}}
\begin{document}
\title{On Universal Scaling of Distributed Queues under Load Balancing} 

\author{Xin Liu}
\email{xliu272@asu.edu}
\affiliation{
  \institution{School of Electrical, Computer and Energy Engineering,  Arizona State University, Tempe, AZ}
}

\author{Lei Ying}
\email{leiying@umich.edu}
\affiliation{
  \institution{Electrical Engineering and Computer Science Department\\
  The University of Michigan, Ann Arbor, MI}
}

\begin{abstract}
This paper considers the steady-state performance of load balancing algorithms in a many-server system with distributed queues. The system has $N$ servers, and each server maintains a local queue with buffer size $b-1,$ i.e. a server can hold at most one job in service and $b-1$ jobs in the queue. Jobs in the same queue are served according to the first-in-first-out (FIFO) order. The system is operated in a heavy-traffic regime such that the workload per server is $\lambda = 1 -  N^{-\alpha}$ for $0.5\leq \alpha<1.$ We identify a set of algorithms such that the steady-state queues have the following universal scaling, where {\em universal} means that it holds for any $\alpha\in[0.5,1)$: (i) the number of of busy servers is $\lambda N-o(1);$ and (ii) the number of servers with two jobs (one in service and one in queue) is $O(N^{\alpha}\log N);$ and (iii) the number of servers with more than two jobs is $O\left(\frac{1}{N^{r(1-\alpha)-1}}\right),$ where $r$ can be any positive integer independent of $N.$ The set of load balancing algorithms that satisfy the sufficient condition includes join-the-shortest-queue (JSQ), idle-one-first (I1F), and power-of-$d$-choices (Po$d$) with $d\geq N^\alpha\log^2 N.$ We further argue that the waiting time of such an algorithm is near optimal order-wise.  
\end{abstract}

\maketitle

\section{Introduction}
The rapid growth of cloud computing, online social networks, Internet-of-things (IoT), and big data analytics have brought unprecedented volume of data  to data centers \cite{Benson_10, Roy_15, Jeon_18} at an unprecedented speed. Load balancing, which balances the workload across servers in a data center to optimize resource utilization and minimize response times, is at the heart of modern data center operations \cite{Rajesh_13, Borg_15, Eisenbud_16}. For example, it has been reported in \cite{Zats_12, Farhan_15} that extra 100 ms in response time can  lead to 1\% loss in revenue for online retail platforms like Amazon.

While a large-scale data center is operated under a lightly-loaded condition (i.e., the workload is significant lower than its capacity) most of the time, load balancing becomes vital when the system is heavily loaded (i.e., the load approaches to the system capacity) because it is the occasional heavy-load that affects the user experience the most.    
This paper focuses on performance and fundamental limits of load balancing algorithms in large-scale data centers with distributed queues, where jobs are dispatched immediately to servers upon arrival and each server maintains a local queue. We consider a range of heavy-traffic regimes, parameterized by a single heavy-traffic parameter $\alpha.$ We assume the workload per server is $\lambda = 1 - N^{-\alpha}$ for $0.5\leq \alpha<1.$ We remark that the steady-state performance for $\alpha<0.5$ has been analyzed in a recent paper \cite{LiuYin_18}. 
However, little is known about the steady-state performance of load balancing for  $0.5 \leq \alpha < 1,$  which is the focus of this paper. We establish the following results, which complement the results in \cite{LiuYin_18} and provide a comprehensive characterization of the steady-state performance of distributed queues in heavy-traffic regimes. The main results include:
\begin{itemize}
    \item[(i)] We consider a set of load balancing algorithms (denoted by $\Pi$), which includes popular load balancing algorithms such as JSQ, I1F, and Po$d$ with $d\geq N^\alpha\log^2 N.$ Define $S_i$ to be the fraction of servers with at least $i$ jobs at steady state. For any algorithm in $\Pi,$  we establish an upper bound on the $r$th moment of the following metric $$\max\left\{\sum_{i=1}^{b} S_i-1 -\frac{k\log N}{N^{1-\alpha}},0\right\},$$ where $r$ is any positive constant independent of $N.$ The proof is based on Stein's method \cite{BraDaiFen_15,BraDai_17,Yin_16} for queueing systems and by proving State-Space Collapse (SSC) using Lyapunov drift method \cite{ErySri_12}. 
    
    \item[(ii)] Using the moment bounds, we show that under any algorithm in $\Pi,$ the waiting probability of a job and the mean waiting time  are both $O\left(\frac{\log N}{N^{1-\alpha}}\right).$ In other words, these load balancing algorithms achieve asymptotically zero waiting time (so zero delay) for $\alpha<1$ while maintaining full efficiency asymptotically ($\rho\rightarrow 1$ for any $\alpha>0$). 
    
    \item[(iii)] We further characterize the steady-state queue lengths under any algorithm in $\Pi.$ Specifically, the following universal scaling holds:
    \begin{itemize}
    \item The average number of busy servers is $\lambda N-o(1).$
    \item The average number of servers with two jobs, i.e., one in service and one in queue, is $O\left(N^{\alpha}\log N\right).$
    \item The average number of servers with more than two jobs is $o(1),$ i.e., it is rare to have a server with three or more jobs.
    \end{itemize}
    \end{itemize}

We remark the most interesting point of result (iii) is the number of servers with two jobs because the result says the number of servers with at least three jobs are rare and the conclusion that the number of busy servers is almost $\lambda N-o(1)$ can be easily seen from the work-conserving property. The result in \cite{LiuYin_18} shows that when $\alpha<0.5,$ the number of jobs waiting in the system is $o(1)$, i.e. rarely, there is any job waiting; and for $0.5\leq \alpha<1,$ the number of jobs waiting is close to $N^\alpha=\frac{1}{1-\lambda}.$ This  phase-transition occurring at $\alpha=0.5$ coincides with what has been observed in a many-server system with a single shared queue \cite{HalWhi_81}, the M/M/N system, where it has been shown that the waiting probability is zero when $\alpha<0.5,$ one when $\alpha>0.5,$ and  a nontrivial constant only when $\alpha=0.5.$ Because of this celebrated result \cite{HalWhi_81}, the heavy-traffic regime with $\alpha=0.5$ is called the Halfin-Whitt regime. While a many-server system with distributed queues behaves fundamentally differently from the M/M/N system, it is interesting that for both systems, the phase-transition occurs at $\alpha=0.5.$

In Figure \ref{fig:contribution}, we plot  a $\log$-scaled version of the number of servers with two jobs (i.e. $\log E[N(S_2-S_3)] / \log N$) versus the heavy-traffic parameter $\alpha$ for $0<\alpha<1$ under JSQ. We note that the steady-state result for $0<\alpha<0.5$ was recently established in \cite{LiuYin_18}, which corresponds to pink line in the figure.  \cite{Bra_18} proved that $E[NS_2]$ is $O(\sqrt{N})$ in the Haffin-Whitt regime (i.e. $\alpha = 0.5$) at steady state, which is red dot in the figure.  \cite{GuptaWalton_17} proved a diffusion result for $\alpha=1$ that the scaled ($1/N$) total queue length is a constant, which implies $E[NS_2]=O(N)$ and it corresponds to green dot in the figure. We, however, note that the result in \cite{GuptaWalton_17} is proved for the process level only, and it remains open whether the same result holds for the steady-state. This paper completes the understanding of steady-state queue lengths for a large set of load balancing algorithms, and provides the following universal scaling result: 
$$E[NS_1]=\lambda N-o(1), \quad E[NS_2]=O(N^\alpha\log N), \quad E[NS_3]=o(1).$$ We conjecture these bounds above differ from the tight bounds by a logarithmic factor. Specifically, we {\em conjecture} that the following results hold for any load balancing algorithm in the set $\Pi:$
$$E[N S_1]=\lambda N-o(1), \quad E[N S_2]=\Theta(N^\alpha), \quad E[N S_3]=o(1);$$ and these bounds are asymptotically optimal. 

\begin{figure}[htbp]
  \centering
  \includegraphics[width=2.6in]{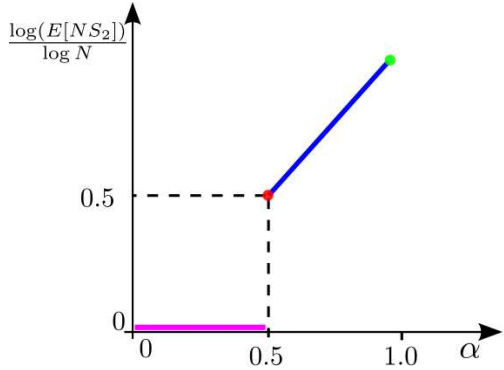}
  \caption{Our contributions and related work.}
  \label{fig:contribution}
\end{figure}

Figure \ref{fig:scale} is a simulation result that shows $\frac{E[NS_2]}{N^\alpha}$ under JSQ for $\alpha = 0.5,$ $0.75,$ and $1.0.$ According to our results and conjecture,  $N^{1-\alpha}E[S_2]$ should remain almost as a constant even as $N$ increases. This can be seen clearly from the figure, which further confirms our main results.  
\begin{figure}[!htbp]
  \centering
  \includegraphics[width=2.8in]{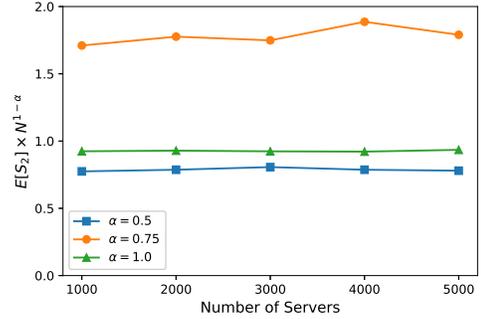}
  \caption{Scaling of $N^{1-\alpha} E[S_2]$ versus $N$ under JSQ}
  \label{fig:scale}
\end{figure}

\subsection{Related Work}
Heavy-traffic analysis of many server systems, motivated by call centers, was pioneered  by Shlomo Halfin and Ward Whitt  in their seminal work \cite{HalWhi_81}, where they considered the M/M/N system (also called the Erlang-C systemf) with load  $ 1 -\gamma N^{-\alpha}.$ They discovered that a phase-transition occurs when $\alpha=0.5,$ such that the waiting probability is asymptotically zero when $\alpha<0.5,$ is one when $\alpha>0.5,$ and is a nontrival value only when $\alpha=0.5.$ The heavy traffic regime with $\alpha = 0.5$, therefore, is called the Halfin-Whitt regime or the quality-and-efficiency-driven (QED) regime. \cite{HalWhi_81} assumes exponential service times.  For general service times, the distribution of queue lengths in the Halfin-Whitt regime  has been a topic of great interest since then (see \cite{Ree_09,PuhRee_10,GamGol_13} and references within).  Heavy-traffic regimes other than the Halfin-Whitt regime have also been considered. For example, \cite{Atar_12} considered the non-degenerate slowdown regime, where  $\alpha=1$ and the mean waiting time is comparable with the mean service time. They proved that the total queue length with a proper scaling ($1/N$) converges to a diffusion process, and the result has been generalized in \cite{He_15} to general service time distributions. In a recent paper, \cite{BraDaiFen_15} provided a universal characterization (for any $0\leq \alpha \leq 1$) of the steady-state performance of the Erlang-C system  based on Stein's method. Similarly, this paper, together with \cite{LiuYin_18}, provide a universal characterization of many-server systems with distributed queues.

Recent studies on many-server systems with distributed queues are motivated by the proliferation of cloud computing and large-scale data centers, where computing tasks (such as search, data mining, etc) are routed to a server upon arrival, instead of waiting at a centralized queue. Analyzing the performance of many-server queueing system is known to be difficult, in particular, when the number of servers $N$ is large.  A significant result in this area is recent work \cite{EscGam_18}, which proved that the system converges to a two-dimensional diffusion process under the join-the-shortest-queue algorithm (JSQ) \cite{Win_77,Web_78} in the Halfin-Whitt regime. Specifically, at the process-level (i.e. over a finite time interval), it proved the scaled process ($1/\sqrt{N}$) counting the number of idle servers and servers with exactly two jobs weakly converges to a two-dimensional reflected Ornstein-Uhlenbeck (OU) process. \cite{Bra_18} proved the diffusion limit of JSQ is valid at steady-state. \cite{BanerMukherjee2019} provided further characterization of the steady distribution of the diffusion process established in \cite{EscGam_18}.
Based on the diffusion limits in \cite{EscGam_18}, \cite{MukBorvan_18} showed that the same diffusion limit is valid under the power-of-$d$ choices with a properly chosen $d.$
\cite{GuptaWalton_17} considers the non-degenerate slowdown ($\alpha=1$), and establishes the diffusion limit of the total queue length scaled by $1/N.$ By using the diffusion limit, it compares the delay performance of various load balancing algorithms, including JSQ, JIQ and I1F. We note the diffusion limit was established over any finite time interval. It remains open whether the same results in \cite{GuptaWalton_17} hold at steady-state. Furthermore, the system has constant queueing delays when $\alpha=1,$ so the queueing behaviors and the analysis in \cite{GuptaWalton_17} are fundamentally different from this paper. 
The work mostly related to ours is \cite{LiuYin_18}, which considers the steady-state performance of a set of load balancing algorithms, including JSQ, power-of-$d$-choices (Po$d$) \cite{Mit_96,VveDobKar_96}, idle-one-first (I1F) \cite{GuptaWalton_17}, join-the-idle-queue (JIQ) \cite{LuXieKli_11,Sto_15} in the sub-Halfin-Whitt regime (i.e. for $0 < \alpha < 0.5$). The results show that the number of servers with at least two jobs is negligible $o(1),$ which establish the pink line in Figure \ref{fig:contribution}. Our paper complements the results in \cite{LiuYin_18} and establishes the blue line in Figure \ref{fig:contribution}, a universal scaling for $0.5\leq \alpha<1.$
  
\section{Model and Main Results}
We consider a large-scale system with $N$ homogeneous servers. We assume job arrival is a Poisson process with rate $\lambda N$ and service times follow an exponential distribution with rate one. As shown in Figure \ref{model-pod}, each server maintains a separate queue
and buffer size is $b-1$ (i.e., each server can have one job in service and $b-1$ jobs in queue). Jobs in a queue are served in an First-in-First-out (FIFO) order.
We focus on the traffic regime such that $\lambda=1-N^{-\alpha}$ with $0.5\leq\alpha<1.0.$ 

\begin{figure}[!htbp]
  \centering
  \includegraphics[width=3.0in]{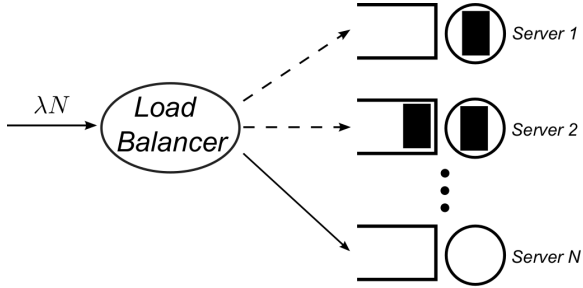}
  \caption{Load Balancing in Many-Server Systems.}
  \label{model-pod}
\end{figure}

Denote by $S_i(t)$ to the fraction of servers with at least $i$ jobs at time $t \geq 0.$ Under the finite buffer assumption with buffer size $b-1$, we define $S_i(t) = 0,\  \forall i \geq b+1,\ \forall t\geq 0,$ for notational convenience. Furthermore, define set ${\mathbb S}\subseteq {\mathbb R}^b$ such that $$\mathbb S = \{ s\in{\mathbb R}^b ~|~ 1\geq s_1\geq \cdots \geq s_{b}\geq  0\hbox{ and } ~Ns_{i}\in \mathbb N,\ \forall i\}.$$ We have $S(t) = [S_1(t), S_2(t), \cdots ,S_b(t)]^T \in \mathbb S$ for any $t\geq 0.$ 
We consider load balancing algorithms which route each incoming job to a server upon its arrival based on $S(t)$ so that ($S(t):t\geq 0$) is a finite-state and irreducible continuous-time Markov chain (CTMC), which implies that ($S(t):t\geq 0$) has a unique stationary distribution.

Let $S \in \mathbb S$ be the random variables having the stationary distribution of ($S(t):t\geq 0$). Note $\lambda,$ ($S(t):t\geq 0$), and $S$ all depend on $N,$ the number of servers in the system. {Let $A_i(s)$ denote the probability that an incoming job is routed to a server with at least $i$ jobs when the system is in state $s \in \mathbb S,$ e.g.  $$A_1(s)= \Pr\left(\left. \text{an incoming job is routed to a busy server} \right| {S(t)=s}\right).$$ } 
Define a set of load balancing $\Pi$ to be
\begin{align*}
    \Pi = \left\{ \pi ~|~ \text{under load balancing~} \pi, A_1(s)\leq \frac{1}{\sqrt{N}}, s_1\leq 1-\frac{1}{4N^\alpha}, \right. \\ 
    \left. A_2(s)\leq 10\left(\frac{2r}{N^{1-\alpha}}\right)^r,  s_2\leq 0.95, \text{~and~} A_b(s) \leq s_b, \forall s \in{\mathbb S} \right\}. 
\end{align*}

A load balancing algorithm in $\Pi$ implies that (i) for any given state $s$ in which at least $\frac{1}{4N^\alpha}$ fraction of servers are idle, an incoming job should be routed to an idle server with probability at least $1-\frac{1}{\sqrt{N}};$
(ii) for any given state $s$ in which at least 5\% servers have two job or less, the probability an incoming job is routed to a server with at least two jobs should be no more than $10\left(\frac{2r}{N^{1-\alpha}}\right)^r;$ and (iii) given state $s,$ the probability that a job is dropped because of being routed to a server with full buffer is upper bounded by that under a random routing algorithm, which is $s_b.$
There are several well-known algorithms that satisfy this condition.
\begin{itemize}
\item {\bf Join-the-Shortest-Queue (JSQ)}: JSQ routes an incoming job to the least loaded server in the system, so $A_1(s)=0$ for $s_1\leq 1-\frac{1}{4N^\alpha};$ $A_2(s) =  1_{\{s_{2}=1, s_{3}<1\}} = 0,$ for $s_2 < 0.95;$ and $A_b(s) \leq s_b.$

\item {\bf Idle-One-First (I1F)}: I1F routes an incoming job to an idle server if available and else to a server with one job if available. Otherwise, the job is routed to a randomly selected server. Therefore, $A_1(s)=0$ for $s_1\leq 1-\frac{1}{4N^\alpha};$ $A_2(s) =  1_{\{s_{2}=1, s_{3}<1\}} = 0,$ for $s_2 < 0.95;$ and $A_b(s) \leq s_b.$

\item {\bf Power-of-$d$-Choices (Po$d$)}:  Po$d$ samples $d$ servers uniformly at random and dispatches the job to the least loaded server among the $d$ servers. Ties are broken uniformly at random. Given $d \geq N^\alpha\log^2 N,$ $A_1(s)\leq \frac{1}{\sqrt{N}}$ for $s_1\leq 1-\frac{1}{4N^\alpha};$ $A_2(s) = s_2^d \leq (0.95)^d \leq 10 \left(\frac{2r}{N^{1-\alpha}}\right)^r;$ and $A_b(s) \leq s_b.$

\end{itemize}

We first have the following moment bounds which are instrumental for establishing the main results of this paper. The proof of this theorem is presented in Section \ref{sec:proof}. 
\begin{theorem} \label{Thm:main}
Assume $\lambda=1- N^{-\alpha}$ for $0.5\leq\alpha<1$ and buffer size $b.$ For any  load balancing algorithms in $\Pi,$ the following bound holds at steady-state for $N$ and positive integer $r$ such that $\frac{N^{1-\alpha}}{32\log N} > r,$
\begin{align*}
E\left[\left(\max\left\{\sum_{i=1}^{b} S_i-1 -\frac{k\log N}{N^{1-\alpha}},0\right\}\right)^r\right]\leq 10 \left(\frac{2r}{N^{1-\alpha}}\right)^r,
\end{align*} where $k = 32r b+1.$ \qed
\end{theorem}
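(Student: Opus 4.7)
The plan is to apply the Stein/Lyapunov-drift framework at stationarity with the test function $V(s) = Y(s)^{r+1}$, exploit the identity $E[GV] = 0$, and extract a recursive inequality on the moments of $Y$. Since the only transitions that move $\sum_i s_i$ are non-dropped arrivals (rate $\lambda N(1-A_b(s))$, shift $+1/N$) and service completions (rate $N s_1$, shift $-1/N$), a second-order Taylor expansion gives, on $\{Y>0\}$,
\begin{align*}
GV(s) = (r+1)\,Y(s)^{r}\bigl[\lambda(1-A_b(s)) - s_1\bigr] + R(s),
\end{align*}
where $R(s)$ is an $O(r^2 Y^{r-1}/N)$ curvature correction plus an $O(N^{-r})$ boundary term from the single transition that takes $Y$ from $0$ to $1/N$.

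The crux is controlling the drift bracket $\lambda(1-A_b) - s_1$ on $\{Y>0\}$; since $s_1\approx \lambda$ at equilibrium, this is not deterministically negative, and I rely on state-space collapse. Specifically I would establish
\begin{align*}
\Pr\bigl(Y>0,\ s_1 < 1 - \tfrac{1}{4N^\alpha}\bigr) \leq N^{-q}
\end{align*}
for any desired polynomial order $q$ (choosing $q=(r+1)b+1$ later). Intuitively, $Y>0$ means queueing work has accumulated, which can happen only when idle servers are scarce: on $\{s_1 \leq 1-\tfrac{1}{4N^\alpha}\}$ the defining condition $A_1(s)\leq 1/\sqrt{N}$ of $\Pi$ forces almost every arrival to an idle server, preventing buildup. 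Formally this bound would come from an Eryilmaz-Srikant polynomial-Lyapunov recursion on an auxiliary function coupling $Y$ with the excess $(1-s_1 - \tfrac{1}{4N^\alpha})_+$, combined with the identities $G\sum_i s_i = \lambda(1-A_b(s))-s_1$ and $G\sum_{i\geq 2}s_i = \lambda(A_1(s)-A_b(s))-s_2$ and with the $\Pi$-constraints on both $A_1$ and $A_2$.

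Splitting the main drift by this SSC event, on the good set $\{s_1 \geq 1-\tfrac{1}{4N^\alpha}\}$ one has $\lambda - s_1 \leq -\tfrac{3}{4N^\alpha}$, producing the leading negative contribution $-\tfrac{3(r+1)}{4N^\alpha}E[Y^r;\text{good}]$; on the bad set the uniform bound $Y\leq b$ (from $\sum_i s_i \leq b$) combined with the SSC tail absorbs the residual. Plugging back into $E[GV] = 0$ and rearranging yields the recursion
\begin{align*}
E[Y^r] \leq \frac{Cr}{N^{1-\alpha}}\,E[Y^{r-1}] + b^{\,r+1}N^{-q} + O(N^{-r}),
\end{align*}
and iterating from $E[Y^0]=1$ with careful constant bookkeeping delivers the claimed $10\bigl(2r/N^{1-\alpha}\bigr)^r$ with an absolute constant. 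The main obstacle will be the SSC lemma itself, specifically calibrating the cushion $k = 32rb+1$ so that the $\log N$ slack is exactly what buys polynomial-in-$N$ decay of the SSC residual at the rate $b^{r+1}N^{-q}$ needed to close the recursion uniformly in $r$ and $b$; the hypothesis $N^{1-\alpha}/(32\log N) > r$ is precisely what keeps the cushion below $1$ so the Taylor expansions stay valid throughout the induction.
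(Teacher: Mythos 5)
Your main engine coincides with the paper's. The paper phrases the argument via Stein's equation for the fluid model $\dot x = -N^{-\alpha}$, but the Stein solution is exactly $g(x) = -\frac{N^{\alpha}}{r+1}\,h_{\bar k}^{r+1}(x)$, so applying $E[Gg(\sum_i S_i)]=0$ is the same computation as your direct drift argument with $V=Y^{r+1}$: the same decomposition into a drift-bracket term $Y^{r}\bigl(\lambda(1-A_b(s))-s_1\bigr)$, a curvature remainder that becomes $\frac{r}{N^{1-\alpha}}E[Y^{r-1}]$ after dividing by the drift gain, and a boundary term of order $N^{-(r-\alpha)}$; the same good/bad split on a state-space-collapse event of the form $\{Y>0,\ s_1<1-\Theta(N^{-\alpha})\}$; and the same iteration of the moment recursion down to $r=0$. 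Two bookkeeping items you omit are handled explicitly in the paper but are cosmetic: each iteration shifts the argument by $1/N$, so the cushion parameter must be allowed to shrink (the paper lets $\bar k$ range over $[k-\frac{r}{N^{\alpha}\log N},k]$), and the good-set threshold ends up at $\frac{1}{2N^{\alpha}}$ rather than $\frac{1}{4N^{\alpha}}$ because the tail bound only takes effect one band above the level where the drift turns negative.

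The one substantive concern is your plan for the SSC tail bound, which you correctly flag as the crux. You propose an Eryilmaz--Srikant polynomial-moment recursion on an auxiliary function, but a fixed-order moment bound cannot deliver $\Pr(\cdot)\le N^{-q}$ with $q$ of order $rb$. Concretely, the relevant Lyapunov function (the paper uses $V(s)=\min\bigl\{\sum_{i\ge 2}s_i-\frac{\bar k\log N}{N^{1-\alpha}},\,1-s_1\bigr\}$, whose super-level set $\{V\ge \frac{1}{2N^{\alpha}}\}$ contains your bad event) has drift $-\Theta\bigl(\frac{k}{b}\frac{\log N}{N^{1-\alpha}}\bigr)$ and jump size $\frac{1}{N}$, so an $m$-th moment recursion gives roughly $E[V^{m}]\le \bigl(\frac{m}{N^{\alpha}\log N}\bigr)^{m}$, and Markov at level $N^{-\alpha}$ yields only $\bigl(\frac{4m}{\log N}\bigr)^{m}$; pushing this below $N^{-q}$ forces $m$ to grow like $q\log N/\log\log N$, i.e., you are driven into the exponential regime anyway. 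The paper instead invokes a Bertsimas--Gamarnik--Tsitsiklis-type geometric tail bound (in its CTMC form), which converts the drift directly into $\bigl(1+\frac{k-1}{b}\frac{\log N}{N^{1-\alpha}}\bigr)^{-N^{1-\alpha}/8}\le N^{-2r}$; this is precisely where $k=32rb+1$ and the hypothesis $\frac{N^{1-\alpha}}{32\log N}>r$ are consumed (the hypothesis keeps the per-step geometric ratio nondegenerate, not merely "the cushion below $1$" as you suggest). If you replace your polynomial recursion with this geometric bound, the rest of your argument goes through essentially as written.
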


{Note the expectation in Theorem \ref{Thm:main} is with respect to the stationary distribution of the CTMC ($S(t):t\geq 0$) according to the definition of $S.$} Based on Theorem \ref{Thm:main}, we have the  {\em universal} scaling results in Corollary \ref{Thm:S3} and asymptotic zero waiting results in Corollary \ref{Thm:zerodelay}.  

To establish the universal scaling results, in Corollary \ref{Thm:S3}, we first show that almost no server has more than two jobs under a load balancing algorithm in $\Pi.$    
\begin{corollary} \label{Thm:S3}
Assume $\lambda=1-N^{-\alpha}$ for $0.5\leq\alpha<1.$ Under any load balancing algorithm in ${\Pi},$ the following results hold for any $N$ such that $\frac{N^{1-\alpha}}{k\log N} \geq 5,$ 
$$E[S_3] \leq 20 \left(\frac{3r}{N^{1-\alpha}}\right)^r.$$
\end{corollary}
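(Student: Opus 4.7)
My plan is to reduce the bound on $E[S_3]$ to the moment bound of Theorem~\ref{Thm:main} through a steady-state flow-balance identity, followed by a split according to whether the threshold $s_2 = 0.95$ from the definition of $\Pi$ is exceeded.

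First I would derive a stationary identity linking $E[S_3]$ with $E[A_2(S)]$. Writing the generator equations at stationarity for each $i = 1, \ldots, b$, and equating the arrival-driven rate of transitions from exactly $i-1$ jobs to exactly $i$ jobs with the service-completion rate in the reverse direction, yields
\[
\lambda\bigl(E[A_{i-1}(S)] - E[A_i(S)]\bigr) \;=\; E[S_i] - E[S_{i+1}],
\]
under the conventions $A_0 \equiv 1$ and $S_{b+1} \equiv 0$. Telescoping these identities from $i = 3$ up to $i = b$ collapses the left side to $\lambda\bigl(E[A_2(S)] - E[A_b(S)]\bigr)$ and the right side to $E[S_3]$, producing the key reduction
\[
E[S_3] \;=\; \lambda\bigl(E[A_2(S)] - E[A_b(S)]\bigr) \;\leq\; E[A_2(S)].
\]

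Next I would decompose $E[A_2(S)]$ according to whether $S_2 \leq 0.95$. By the definition of $\Pi$, on the event $\{S_2 \leq 0.95\}$ the integrand is pointwise at most $10(2r/N^{1-\alpha})^r$. On the complementary event I use $A_2(S) \leq 1$ and bound $\Pr(S_2 > 0.95)$ by invoking Theorem~\ref{Thm:main}: the monotonicity $S_1 \geq S_2$ forces $\sum_{i=1}^b S_i - 1 \geq S_1 + S_2 - 1 > 0.9$ on this event, and the hypothesis $N^{1-\alpha}/(k\log N) \geq 5$ gives $c := k\log N/N^{1-\alpha} \leq 0.2$, so $\max\{\sum_i S_i - 1 - c, 0\}$ exceeds $0.7$ there. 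A Markov inequality applied to the $r$th-moment bound in Theorem~\ref{Thm:main} then controls this probability by $10(2r/N^{1-\alpha})^r / 0.7^r$.

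Adding the two pieces yields an upper bound of the form $10(2r/N^{1-\alpha})^r\bigl(1 + (10/7)^r\bigr)$, and a short check that $(2/3)^r + (20/21)^r \leq 2$ for every positive integer $r$ (both summands lying in $(0,1]$) converts this into the target $20(3r/N^{1-\alpha})^r$. The main obstacle I anticipate is the flow-balance identity in the first step: one has to identify the arrival-driven rate of change of $NS_i$ in terms of $A_{i-1} - A_i$, handle the boundary case $i = b$ correctly (jobs routed to full servers are dropped rather than causing transitions), and justify equating stationary means via the generator of the finite-state irreducible CTMC. Once that identity is in hand, what remains is a short Markov-inequality argument whose thresholds are tuned precisely so the constants collapse to the stated bound.
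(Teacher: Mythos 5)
Your proposal is correct and follows essentially the same route as the paper: the stationarity identity $E[S_3]=\lambda(E[A_2(S)]-E[A_b(S)])$ is exactly the paper's Lemma~\ref{lemma:S3} (obtained there with the test function $f(s)=\sum_{i=3}^b s_i$, which is just your telescoped flow balance), and the subsequent split on $\{S_2\geq 0.95\}$ with Markov's inequality applied to Theorem~\ref{Thm:main} matches the paper's argument, including the constant bookkeeping.
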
.  

Next, we analyze the waiting time, waiting probability for algorithms in $\Pi,$ and the steady-state queues. 
Let $\mathcal W$ denote the event that an incoming job is routed to a busy server, and $p_{\mathcal W}$ denote the probability of this event at steady-state. Let $\mathcal B$ denote the event that an incoming job is blocked (discarded) and $p_{\mathcal B}$ denote the probability of this event at steady-state. {Note the  ${\mathcal B}\subseteq {\mathcal W}$ because an incoming job is blocked when being routed to a busy server with $b$ jobs.}
Furthermore, let $W$ denote the steady-state waiting time of those jobs that are not blocked.

\begin{corollary} \label{Thm:zerodelay}
Assume $\lambda=1-N^{-\alpha}$ for $0.5\leq\alpha<1.$ Under load balancing algorithm in $\Pi$ and assume any positive constant $r$ such that
$N^{1-\alpha} \geq 3(40)^{\frac{r}{2}}r,$ we have $$E\left[W\right] \leq \frac{4k\log N}{N^{1-\alpha}},$$ and $$ p_{\mathcal W}\leq {20 \left(\frac{3r}{N^{1-\alpha}}\right)^{\frac{r}{2}}} +\frac{2k\log N}{N^{1-\alpha}}.$$ We furthermore have 
    $$\lambda N - 10N\left(\frac{3r}{N^{1-\alpha}}\right)^{\frac{r}{2}} \leq E\left[NS_1\right]\leq \lambda N,$$ so 
      $$E\left[NS_1\right]=\lambda N -o(1);$$ 
    and 
    $$E\left[NS_2\right]\leq {10N\left(\frac{3r}{N^{1-\alpha}}\right)^{\frac{r}{2}}}+{2kN^\alpha \log N}=O(N^\alpha \log N).$$

\end{corollary}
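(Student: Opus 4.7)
The plan is to combine Theorem \ref{Thm:main} and Corollary \ref{Thm:S3} with two steady-state flow identities. Equating the busy-server completion rate with the non-blocked arrival rate gives $E[NS_1]=\lambda N(1-p_{\mathcal B})$. Equating the rate at which jobs join a queue (arrivals routed to a non-empty, non-full server) with the rate at which queued jobs enter service (completions at servers holding at least two jobs) gives $E[NS_2]=\lambda N(p_{\mathcal W}-p_{\mathcal B})$. These identities convert the waiting-time and queue-length quantities of the corollary into expectations that Theorem \ref{Thm:main} directly controls.

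First I would bound the blocking probability. Since any $\pi\in\Pi$ satisfies $A_b(s)\leq s_b$, we have $p_{\mathcal B}=E[A_b(S)]\leq E[S_b]\leq E[S_3]$, and Corollary \ref{Thm:S3} yields $p_{\mathcal B}\leq 20(3r/N^{1-\alpha})^r$. Plugging into the first flow identity gives the two $E[NS_1]$ bounds: $E[NS_1]\leq \lambda N$ is trivial, while $E[NS_1]\geq \lambda N-20\lambda N(3r/N^{1-\alpha})^r\geq \lambda N-10N(3r/N^{1-\alpha})^{r/2}$, where the last step uses the hypothesis $N^{1-\alpha}\geq 3(40)^{r/2}r$ to guarantee $2\lambda(3r/N^{1-\alpha})^{r/2}\leq 1$. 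Since the penalty term is $o(1)$ as $N\to\infty$ with $r$ fixed, one also gets $E[NS_1]=\lambda N-o(1)$.

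For $E[NS_2]$, I start from Theorem \ref{Thm:main} in the form $\sum_{i=1}^b S_i\leq 1+k\log N/N^{1-\alpha}+Y$, where $E[Y]\leq 20r/N^{1-\alpha}$ by Jensen applied to the $r$-th moment bound. Dropping the non-negative terms $S_3,\ldots,S_b$ yields $S_2\leq \sum_i S_i-S_1$, and substituting $E[S_1]=\lambda(1-p_{\mathcal B})$ gives
\begin{align*}
E[S_2]\ \leq\ \frac{1}{N^\alpha}+\lambda p_{\mathcal B}+\frac{k\log N}{N^{1-\alpha}}+E[Y].
\end{align*}
For $\alpha\geq 0.5$ we have $1/N^\alpha\leq \log N/N^{1-\alpha}$, and both this term and $E[Y]$ can be absorbed into the $k\log N/N^{1-\alpha}$ term using $k=32rb+1\geq 20r+1$; combining with the blocking estimate then produces $E[S_2]\leq 10(3r/N^{1-\alpha})^{r/2}+2k\log N/N^{1-\alpha}$, and multiplying by $N$ yields the stated $E[NS_2]$ bound.

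The waiting-probability bound now follows from the second flow identity $p_{\mathcal W}=p_{\mathcal B}+E[S_2]/\lambda$, using $\lambda\geq 1/2$ together with the two bounds above. For the mean waiting time I apply Little's law to the set of waiting jobs, whose total count equals $N\sum_{j=2}^b S_j$ and whose (non-blocked) arrival rate is $\lambda N(1-p_{\mathcal B})$, giving $E[W]=E[\sum_{j=2}^b S_j]/(\lambda(1-p_{\mathcal B}))$; the numerator is controlled by the same calculation as $E[S_2]$ (using the exact equality $\sum_{j=2}^b S_j=\sum_i S_i-S_1$ instead of the inequality $S_2\leq \sum_i S_i-S_1$), and the denominator is close to one since $p_{\mathcal B}$ decays polynomially. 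The main obstacle is bookkeeping: the small error terms $1/N^\alpha$, $\lambda p_{\mathcal B}$, $E[Y]$, together with the $1/\lambda$ and $1/(1-p_{\mathcal B})$ prefactors, must all collapse into the stated constants, which is what the hypothesis $N^{1-\alpha}\geq 3(40)^{r/2}r$ and the specific choice $k=32rb+1$ are engineered for.
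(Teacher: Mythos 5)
Your proposal is correct and follows the same overall skeleton as the paper --- bound $p_{\mathcal B}$, use the work-conservation identity $E[S_1]=\lambda(1-p_{\mathcal B})$, control $E\left[\sum_i S_i\right]$ via Theorem \ref{Thm:main}, and finish with Little's law --- but two of the sub-steps are carried out by genuinely different arguments. For the blocking probability, the paper conditions on the event $\{S_b\le\delta_b\}$ with $\delta_b=\sqrt{10}\left(3r/N^{1-\alpha}\right)^{r/2}$ and applies Markov's inequality to the complement, obtaining $p_{\mathcal B}\le 10\left(3r/N^{1-\alpha}\right)^{r/2}$; your direct chain $p_{\mathcal B}=E[A_b(S)]\le E[S_b]\le E[S_3]$ (valid by PASTA, and requiring $b\ge 3$ --- an assumption the paper's own step $\Pr(S_b>\delta_b)\le\Pr(S_3>\delta_b)$ also needs) is both shorter and tighter, giving exponent $r$ instead of $r/2$. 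For the waiting probability, the paper introduces the event of waiting-without-blocking and uses Little's law together with $E[T_Q]\ge 1$ to get $p_{\overline{\mathcal W}}\le E\left[\sum_{i\ge2}S_i\right]/\lambda$; your rate-conservation identity $E[S_2]=\lambda(p_{\mathcal W}-p_{\mathcal B})$, i.e.\ $E[Gf(S)]=0$ with $f(s)=\sum_{i\ge2}s_i$ exactly parallel to Lemma \ref{lemma:S3}, is an exact equality and again at least as tight. Your handling of the $r$-dependence of $k$ via Jensen, $E[h_k]\le\left(E[h_k^r]\right)^{1/r}\le 20r/N^{1-\alpha}$, is also more careful than the paper's invocation of Theorem \ref{Thm:main} ``with $r=1$,'' which silently changes $k$. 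The only caveat is in the final bookkeeping for $p_{\mathcal W}$: the crude bound $\lambda\ge 1/2$ is too weak to land on the stated constant $2k$ (a factor of $2$ on $E[S_2]/\lambda$ would yield $4k$); you need $1/\lambda=1/(1-N^{-\alpha})$ close to $1$, which holds since $\alpha\ge 0.5$, so this is a slip of presentation rather than a gap.
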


In the M/M/N system where a centralized queue is maintained for complete resource pooling, the average waiting time per job is $O\left(\frac{1}{N^{1-\alpha}}\right).$ In load balancing systems, Corollary \ref{Thm:zerodelay} suggests the waiting time to be $O\left(\frac{k\log N}{N^{1-\alpha}}\right).$  Therefore, the expected waiting of a load balancing algorithms in $\Pi$ is close to that in the M/M/N system when $N$ is large. It implies load balancing algorithms in $Pi$ have near optimal delay performance since the mean waiting time of the M/M/N system is a lower bound on that of any many-server systems with distributed queues. We conjecture that the average waiting time of load balancing algorithms in $\Pi$ is $\Theta\left(\frac{1}{N^{1-\alpha}}\right)$ as in the M/M/N system. The additional term  $k\log N,$ however, is needed in establishing a state-space-collapse result due to technical reasons. 

\section{State-space-collapse in the heavy traffic regime and Simple fluid model}
To prove the main theorem and corollaries, we first need to understand the system dynamic under the load balancing algorithms in $\Pi.$
In particular, we focus on those states where the total queue length is larger than $1+\frac{k\log N}{N^{1-\alpha}},$ so the truncated distance function used in Theorem \ref{Thm:main} has a non-zero value. In this region of the state space, we have a key observation that the system collapses to a smaller region, i.e. state-space-collapse (SSC) occurs, which is critical for establishing the main theorem.
We next explain the intuition behind SSC and present the formal statement in Lemma \ref{ssc:tail}.

For ease of exposition, we consider JSQ in a simple two-dimensional system with buffer size $b=1$, so we only need to consider $s_1$ and $s_2$. Given system state $(s_1, s_2)$ such that $s_1 < 1,$ in a fluid sense, $s_1$ increases with rate $\lambda-s_1\approx 1-s_1$ and $s_2$ decreases with rate $s_2,$ because all arrivals are routed to idle servers under JSQ when $s_1 < 1.$ Therefore, $s_1$ increases quickly and approaches to one  (the green region in the figure). Since most servers are busy  ($s_1$ is close to $1$) in the green region, the total queue length per server (i.e $s_1+s_2$) decreases with rate  $\lambda-(s_1+s_2)\approx \lambda-1,$ i.e. decreasing slowly. Therefore, starting from any state outside of the green and the shallow pink regions, the system will move quickly into the green region and then slowly towards the $s_1$-axis. Due to the difference of the scales (speeds) of the dynamics, the stochastic system at the steady-state is either in the green region or in the shallow pink region with a high probability, as shown in  Lemma \ref{ssc:tail}.
\begin{figure}[!htbp]
  \centering
  \includegraphics[width=2.7in]{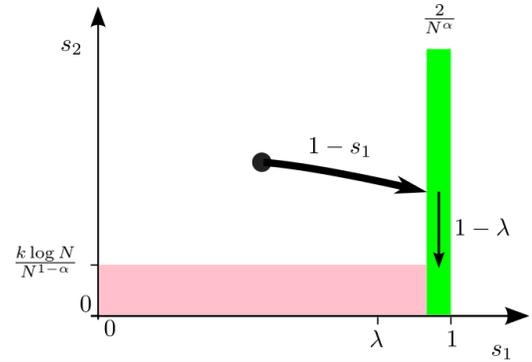}
  \caption{State Space Collapse}
  \label{fig:SSC}
\end{figure}

\begin{lemma}\label{ssc:tail}
For any load balancing in $\Pi,$ we have the following tail probability: For any $N$ such that $\frac{N^{1-\alpha}}{32\log N} > r$ and $k-\frac{r}{N^\alpha \log N} \leq \bar k \leq k,$ 
$$\Pr\left(\min\left\{ \sum_{i=2}^b S_i - \frac{\bar k\log N}{N^{1-\alpha}}, 1-S_1 \right\} \geq \frac{1}{2N^{\alpha}}\right)\leq \frac{1}{N^{2r}}.$$ \qed
\end{lemma}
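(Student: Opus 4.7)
The plan is to run a Lyapunov drift argument on $V(s)=\sum_{i=2}^{b}s_i$, exploiting that on the ``bad'' set
\[
\mathcal B = \Big\{\, s : V(s)-\tfrac{\bar k\log N}{N^{1-\alpha}}\geq \tfrac{1}{2N^\alpha},\; 1-s_1\geq \tfrac{1}{2N^\alpha}\,\Big\},
\]
the drift of $V$ is strongly negative. The generator is $LV(s)=\lambda(A_1(s)-A_b(s))-s_2$, since each non-blocked arrival routed to a server with at least one job raises $V$ by $1/N$ at rate $\lambda N(A_1-A_b)$, and each service completion at a server with at least two jobs lowers $V$ by $1/N$ at rate $Ns_2$. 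On $\mathcal B$, $1-s_1\geq 1/(2N^\alpha)$ certifies $s_1\leq 1-1/(4N^\alpha)$, so the defining property of $\Pi$ forces $A_1(s)\leq 1/\sqrt N$; also $A_b(s)\leq s_b\leq s_2$; and the pigeonhole bound $\sum_{i=2}^{b}s_i\leq(b-1)s_2$ combined with the lower bound on $V(s)$ yields $s_2\geq \bar k\log N/((b-1)N^{1-\alpha})$. Using $\alpha\geq 1/2$ (so $1/\sqrt N\leq 1/N^{1-\alpha}$) and $\bar k=\Theta(rb)$ from $k=32rb+1$, one obtains $LV(s)\leq -\Omega(r\log N/N^{1-\alpha})$ throughout $\mathcal B$.

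To turn this drift estimate into the polynomial tail $1/N^{2r}$, I would apply the stationarity identity $\mathbb E[L\psi(S)]=0$ to the exponential Lyapunov $\psi(s)=\exp(\theta NV(s))$ with $\theta=4r\log N/N^{1-\alpha}$ (so that $\theta$ times the ``depth'' $N^{1-\alpha}/2$ into $\mathcal B$ equals $2r\log N$). Since $NV$ jumps by $\pm 1$ at rates $r_+=\lambda N(A_1-A_b)$ and $r_-=Ns_2$,
\[
L\psi(s)=\psi(s)\big[r_+(s)(e^{\theta}-1)+r_-(s)(e^{-\theta}-1)\big],
\]
and a second-order Taylor expansion shows that on $\mathcal B$ the bracket equals $-\theta\cdot\Omega(N^\alpha\log N)\,\psi(s)$ provided $\bar k\geq 8r(b-1)$, which is guaranteed by the choice $k=32rb+1$. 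Rearranging $\mathbb E[L\psi]=0$ and invoking Markov's inequality at the event defining $\mathcal B$ then yields the desired $\Pr(S\in\mathcal B)\leq 1/N^{2r}$.

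The hard part will be controlling the behaviour \emph{outside} $\mathcal B$: when $s_1>1-1/(2N^\alpha)$, the bound $A_1\leq 1/\sqrt N$ fails and $V$ can drift upward by as much as $O(1)$, so the exponential Lyapunov $\psi$ may blow up on the complement of $\mathcal B$ and destroy the above estimate. The remedy is to localize, either by multiplying $\psi$ by a smooth indicator supported on $\{1-s_1\geq 1/(2N^\alpha)\}$ and absorbing the resulting boundary-flux contributions (which are only $O(N)$ per unit time) into an error term that the exponential decay on $\mathcal B$ dominates, or by working excursion-by-excursion: inside $\mathcal B$, $NV$ behaves like a birth–death chain with drift $-\Omega(N^\alpha\log N)$ against unit jumps, so the probability of penetrating to depth $N^{1-\alpha}/2$ before leaving $\mathcal B$ is at most $\exp(-\Omega(\log^2 N))$, already far smaller than $1/N^{2r}$. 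A parallel drift computation on $Z=1-s_1$ shows that $Z$ too drifts back toward zero on $\mathcal B$, which is consistent with the joint state-space collapse picture of Figure~\ref{fig:SSC} and rules out escape via large $Z$.
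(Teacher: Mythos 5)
Your drift computation on the bad set $\mathcal B$ is essentially correct and reproduces Case~1 of the paper's analysis (the condition $1-s_1\geq \frac{1}{2N^\alpha}$ activates $A_1(s)\leq \frac{1}{\sqrt N}$, $A_b(s)\leq s_b\leq s_2$, and the pigeonhole bound gives $s_2\geq \frac{\bar k\log N}{(b-1)N^{1-\alpha}}$, hence a negative drift of order $\frac{r\log N}{N^{1-\alpha}}$). The genuine gap is in converting this into a stationary tail bound. Because you take $V(s)=\sum_{i=2}^b s_i$, the set $\mathcal B$ is \emph{not} a superlevel set of $V$: the drift of $V$ is negative only where $1-s_1$ is also large, and the event $\{V(S)\text{ large}\}$ by itself is not rare (indeed the whole point of the result is that the system can have $S_1\approx 1$ with $\sum_{i\geq 2}S_i$ of order $\frac{k\log N}{N^{1-\alpha}}$). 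So $\mathbb E[L\psi(S)]=0$ with $\psi=\exp(\theta NV)$ cannot directly yield $\Pr(S\in\mathcal B)\leq N^{-2r}$, and you correctly flag this --- but neither of your remedies closes the hole. The localized version fails quantitatively: the boundary $\{1-s_1=\frac{1}{2N^\alpha}\}$ sits in the bulk of the stationary distribution (typically $1-S_1=\Theta(N^{-\alpha})$), so the chain crosses it at rate $\Theta(N)$ carrying weight $\psi(s)=\exp(\theta N V(s))$, which at the typical value $V\approx\frac{\bar k\log N}{N^{1-\alpha}}$ is $\exp(\Theta(r\log^2 N\cdot N^{2\alpha-1}))$ --- astronomically large, not an absorbable $O(N)$ error. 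The excursion argument has the same unresolved issue: the chain can enter $\mathcal B$ already deep (with $V$ large), so you would need control of the entry distribution, which is essentially the statement being proved.

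The paper's resolution is the missing idea: take the Lyapunov function to be the \emph{minimum} itself, $V(s)=\min\bigl\{\sum_{i=2}^b s_i-\frac{\bar k\log N}{N^{1-\alpha}},\,1-s_1\bigr\}$, so that the bad event becomes exactly the superlevel set $\{V(S)\geq\frac{1}{2N^\alpha}\}$. One then verifies $\triangledown V(s)\leq \frac{2}{\sqrt N}-\frac{k}{b}\frac{\log N}{N^{1-\alpha}}$ whenever $V(s)\geq\frac{1}{4N^\alpha}$ by two cases --- your Case~1 when the sum achieves the minimum, plus a second drift computation on $1-s_1$ (which you only gesture at as a ``parallel computation'') when $1-s_1$ achieves the minimum --- and applies the standard geometric tail bound for CTMCs (Bertsimas--Gamarnik--Tsitsiklis via Lemma~4.1 of \cite{WanMagSri_18}) with $\nu_{\max}\leq\frac{1}{N}$, $q_{\max}\leq N$, $B=\frac{1}{4N^\alpha}$, and $j=\frac{N^{1-\alpha}}{8}$. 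This sidesteps the localization problem entirely; no exponential test function is needed.
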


Lemma \ref{ssc:tail} states that for any load balancing in $\Pi$, at steady state, with a high probability, either $S_1$ is larger than $1-\frac{1}{2N^\alpha}$ (most of servers are busy) or $\sum_{i=1}^b S_i$ is less than $\frac{\bar k \log N}{N^{1-\alpha}}$ (the number of jobs waiting is small). This is reasonable to expect  for a good load balancing $\pi$ in $\Pi$ (e.g. JSQ) because $s_i, \forall i\geq 2$ should not build up if idle servers exist. Lemma \ref{ssc:tail} is based on the  geometric-type bound in \cite{BerGamTsi_01} with Lyapunov function $V(s) = \min\left\{ \sum_{i=2}^b s_i - \frac{\bar k\log N}{N^{1-\alpha}}, 1-s_1 \right\}.$ The details can be found in the Appendix \ref{app:ssc}.

Now combine SSC in Lemma \ref{ssc:tail}, and the truncated distance function $$\max\left\{\sum_{i=1}^{b} s_i-1 -\frac{\bar k\log N}{N^{1-\alpha}},0\right\},$$
which is non-zero when $\sum_{i=2}^{b} s_i-\frac{\bar k\log N}{N^{1-\alpha}} > 1 - s_1.$ Therefore, with a high probability, the distance function is non-zero only if it is inside of the green region in Figure \ref{ssc:tail}. 

When the system is in the green region, in which most servers are busy ($s_1$ is close to $1$), we approximate it with a simple fluid system, whose arrival rate is $\lambda$ and departure rate is $1,$ i.e.,
$$\dot{x}=\lambda-1 =-\frac{1}{N^\alpha}\quad \hbox{when}\quad x>0.$$ 
One would expect that in the green region, the load balancing system behaves ``close" to the simple system, which implies the steady-state performances in terms of the truncated function of the two systems are also ``close". To formalize this intuition, in the following section, we will use Stein's method to couple the two systems with respect to the truncated distance function of total queue length in Theorem \ref{Thm:main}.

\section{Proof of Theorem \ref{Thm:main}} \label{sec:proof}

The steady-state function we consider in Theorem \ref{Thm:main} is a truncated distance function of the total queue length ($r$ is a positive integer)
\begin{align}
\left(\max\left\{\sum_{i=1}^b S_i-1-\frac{\bar k \log N}{N^{1-\alpha}},0\right\}\right)^r, \label{metric}  
\end{align} which measures the value by which the total queue length ($N\sum_{i=1}^b S_i$) exceeds $N+\bar{k}N^\alpha\log N$ at steady state. The function can be used to bound the waiting  probability and waiting time in Corollary \ref{Thm:zerodelay}, as well as to show `the result in Corollary \ref{Thm:S3}. To analyze \eqref{metric} in the original system, we utilize Stein's method, where we first solve Stein's equation  (also called the Poisson equation)  for the truncated function  \eqref{metric} in the simple system  and then bound the expected value of the truncated function using generator coupling.

\subsection{Stein's equation}
Recall the approximated simple system has arrival rate $\lambda$ and departure rate $1,$ i.e. 
\begin{align}
\dot{x}=-\frac{1}{N^\alpha}, \label{gen:L}
\end{align} where $\dot x=\frac{d x}{dt}.$ Define the distance function $$h_{\bar k}(x)=\max\left\{x-1-\frac{\bar k \log N}{N^{1-\alpha}},0\right\},$$ where $k-\frac{r}{N^\alpha\log N} \leq \bar k \leq k.$ Consider function $g: \mathbb R \to \mathbb R,$ such that it satisfies the following equation 
\begin{align}
\frac{d g(x)}{d t} =g'(x)\dot{x}= h^r_{\bar k}(x), \ \forall x \geq 0, \label{Gen:L0}
\end{align} i.e. 
\begin{align}
g'(x) \left(-\frac{1}{N^\alpha}\right) = h^r_{\bar k}(x), \ \forall x \geq 0. \label{Gen:L}
\end{align} according to \eqref{gen:L}, which is called Stein's equation.  

Now consider $x=\sum_{i=1}^b s_i.$ To bound the $r$th $E\left[h^r_{\bar k}\left(\sum_{i=1}^b S_i\right)\right]$  on the right-hand side of \eqref{Gen:L}, we need to analyze the left-hand side $$E\left[g'\left(\sum_{i=1}^b S_i\right) \left(-\frac{1}{N^\alpha}\right)\right].$$ Stein's method enables us to quantify the term by generator coupling as follows.

\subsection{Generator coupling}
For the original system, let $G$ be the generator of CTMC ($S(t):t\geq 0$). Given function $g: \mathbb R \to \mathbb R$ and the system state $S(t) = s,$ we have
\begin{align}
G g\left(\sum_{i=1}^b s_i\right) =&\lambda N (1-A_b(s)) \left(g\left(\sum_{i=1}^b s_i+\frac{1}{N}\right) - g\left(\sum_{i=1}^b s_i\right)\right) \nonumber\\
&+  N s_1 \left(g\left(\sum_{i=1}^b s_i-\frac{1}{N}\right) - g\left(\sum_{i=1}^b s_i\right)\right), \label{Gen:G}
\end{align}  where
\begin{itemize}
    \item $\lambda N(1-A_b(s))$ is the rate that the total queue length increases by one;
    \item $Ns_1$ is the rate that the total queue length decreases by one.
\end{itemize}
A detailed derivation of \eqref{Gen:G} can be found in Appendix \ref{app:gen-diff}. 

For any bounded function $g,$
\begin{align}
E\left[ G g\left(\sum_{i=1}^b S_i\right) \right] = 0, \label{steady-cond}
\end{align}
because the fact that $S$ represents steady-state of the CTMC.
Taking the expected value on both sides of \eqref{Gen:L} and combining it with \eqref{steady-cond}, we obtain
\begin{align}
E\left[h^r_{\bar k}\left(\sum_{i=1}^b S_i\right)\right] =& E\left[g'\left(\sum_{i=1}^b S_i\right) \left(-\frac{1}{N^\alpha}\right)\right] \nonumber\\
=& E\left[g'\left(\sum_{i=1}^b S_i\right) \left(-\frac{1}{N^\alpha}\right)\right] - E\left[ G g\left(\sum_{i=1}^b S_i\right) \right]. \label{Gen:diff}
\end{align}
Note the expectations in above equation are taken with respect to the distribution of $S$. Therefore, we are able to study the generator difference in \eqref{Gen:G} to bound $E\left[h^r_{\bar k}(\sum_{i=1}^b S_i)\right].$ 
Define $\eta = 1 + \frac{{\bar k}\log N}{N^{1-\alpha}}$ to simplify notation.
Define $\mathcal T_1 = \{ x ~|~ \eta - \frac{1}{N} \leq x \leq \eta + \frac{1}{N}\}$ and $\mathcal T_2 = \{ x ~|~  x > \eta + \frac{1}{N}\}.$
The generator difference \eqref{Gen:diff} is given in the following lemma.
\begin{lemma}\label{lemma:gen-diff}
\begin{align}
&E\left[h^r_{\bar k}\left(\sum_{i=1}^b S_i\right)\right]\nonumber\\
= &E\left[g'\left(\sum_{i=1}^b S_i\right)\left(\lambda A_b(S)- 1+S_1\right)\mathbb{I}_{\sum_{i=1}^b S_i \in \mathcal T_2}\right] \label{G-expansion-SSC}\\
&+E\left[\left(g'\left(\sum_{i=1}^b S_i\right)\left(-\frac{1}{N^\alpha}\right)-\lambda(1-A_b(S))g'(\xi) + S_1g'(\tilde{\xi})\right)\mathbb{I}_{\sum_{i=1}^b S_i \in \mathcal T_1}\right] \label{G-expansion-Gradient-1}\\
&-E\left[\frac{1}{2N}\left(\lambda(1-A_b(S))g''(\zeta) + S_1g''(\tilde{\zeta})\right)\mathbb{I}_{\sum_{i=1}^b S_i \in \mathcal T_2}\right]. \label{G-expansion-Gradient-2}
\end{align}
Here $\xi,\zeta  \in \left(\sum_{i=1}^b S_i,\sum_{i=1}^b S_i+\frac{1}{N}\right)$ and $\tilde{\xi},\tilde{\zeta}\in \left(\sum_{i=1}^b S_i-\frac{1}{N},\sum_{i=1}^b S_i\right)$ are random variables whose values depend on $\sum_{i=1}^b S_i.$ \qed
\end{lemma}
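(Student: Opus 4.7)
The plan is to begin from \eqref{Gen:diff} and evaluate the two terms on its right-hand side region by region, partitioning $\mathbb{R}_{\geq 0}$ into $\mathcal{T}_0 := \{x : x < \eta - 1/N\}$, $\mathcal{T}_1$, and $\mathcal{T}_2$. The rationale for this split is that Stein's equation \eqref{Gen:L} forces $g'(x) = -N^\alpha \bigl(\max\{x - \eta, 0\}\bigr)^r$, which vanishes on $(-\infty, \eta]$ and equals a polynomial of degree $r$ on $[\eta, \infty)$, so $g$ has a kink at $\eta$. A second-order Taylor expansion of $g(x \pm 1/N) - g(x)$ is therefore valid only when $x$ and $x \pm 1/N$ all lie strictly on the smooth side of the kink (that is, when $x \in \mathcal{T}_2$), whereas no expansion at all is needed when all three points lie in the flat region (when $x \in \mathcal{T}_0$); the transitional window $\mathcal{T}_1$ must be handled by a first-order mean-value step.

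On $\mathcal{T}_0$ both $g'(x)$ and each increment $g(x \pm 1/N) - g(x)$ vanish because $g$ is locally constant, so this region contributes zero to both sides of \eqref{Gen:diff} and is correctly omitted from the statement. On $\mathcal{T}_2$ I would Taylor-expand to second order,
\begin{align*}
g\!\left(x + \tfrac{1}{N}\right) - g(x) &= \tfrac{1}{N} g'(x) + \tfrac{1}{2N^2} g''(\zeta), \\
g\!\left(x - \tfrac{1}{N}\right) - g(x) &= -\tfrac{1}{N} g'(x) + \tfrac{1}{2N^2} g''(\tilde\zeta),
\end{align*}
substitute these into \eqref{Gen:G}, and combine with the $g'(x)(-1/N^\alpha)$ term from \eqref{Gen:diff}. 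The first-derivative coefficients collapse to $-1/N^\alpha - \lambda(1 - A_b(S)) + S_1$, and using $\lambda = 1 - N^{-\alpha}$ this simplifies cleanly to $\lambda A_b(S) - 1 + S_1$, which is precisely line \eqref{G-expansion-SSC}; the second-derivative remainders produce line \eqref{G-expansion-Gradient-2}.

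On $\mathcal{T}_1$ one cannot Taylor-expand across the kink, so instead I would invoke the first-order mean-value theorem on each increment: $g(x + 1/N) - g(x) = g'(\xi)/N$ and $g(x - 1/N) - g(x) = -g'(\tilde\xi)/N$, with $\xi \in (x, x + 1/N)$ and $\tilde\xi \in (x - 1/N, x)$. Substituting these into \eqref{Gen:G} gives $Gg(\sum_{i=1}^b s_i) = \lambda(1-A_b(S)) g'(\xi) - S_1 g'(\tilde\xi)$, and subtracting from $g'(x)(-1/N^\alpha)$ recovers line \eqref{G-expansion-Gradient-1} directly. Summing the three regional contributions yields the identity. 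There is no deep obstacle, only bookkeeping; the main subtlety is the algebraic cancellation on $\mathcal{T}_2$ that uses $\lambda = 1 - N^{-\alpha}$ to combine the $-1/N^\alpha$ drift of the fluid model with the raw drift $-\lambda(1-A_b(S)) + S_1$ of the Markov chain, producing the $\lambda A_b(S) - 1 + S_1$ form that later pairs with the state-space-collapse bound of Lemma \ref{ssc:tail}.
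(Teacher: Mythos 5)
Your proposal is correct and follows essentially the same route as the paper's own proof: solve Stein's equation explicitly, observe $g$ vanishes below the kink at $\eta$, apply the mean-value theorem on $\mathcal T_1$ and a second-order Taylor expansion on $\mathcal T_2$, and use $\lambda = 1 - N^{-\alpha}$ to collapse the first-order coefficients to $\lambda A_b(S) - 1 + S_1$. The only difference is cosmetic: you make the vanishing region $\mathcal T_0$ an explicit third case, while the paper absorbs it by noting $g = g' = 0$ for $x < \eta$.
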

The proof of Lemma \ref{lemma:gen-diff} is in Appendix \ref{app:gen}. The following analysis provides upper bounds on \eqref{G-expansion-SSC}, \eqref{G-expansion-Gradient-1} and \eqref{G-expansion-Gradient-2}. The terms \eqref{G-expansion-Gradient-1} and \eqref{G-expansion-Gradient-2} are bounded by studying the first order derivative $g'$ and the second order derivative $g''$ as shown in Lemma \ref{lemma:g'+g''}; and the term \eqref{G-expansion-SSC} is bounded by using state space collapse in Lemma \ref{ssc:tail}.

\begin{lemma}\label{lemma:g'+g''}
\begin{align*}
\eqref{G-expansion-Gradient-1} + \eqref{G-expansion-Gradient-2} \leq \frac{2^{r+1}}{N^{r-\alpha}} + \frac{r E\left[h^{r-1}_{\bar k}\left(\sum_{i=1}^b S_i+{\frac{1}{N}}\right)\right]}{N^{1-\alpha}}.
\end{align*} \qed
\end{lemma}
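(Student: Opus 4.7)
The plan is to exploit the explicit solution to Stein's equation to derive uniform bounds on $g'$ and $g''$, and then plug these into \eqref{G-expansion-Gradient-1} and \eqref{G-expansion-Gradient-2} term by term. Setting $\eta = 1+\frac{\bar k\log N}{N^{1-\alpha}}$, equation \eqref{Gen:L} gives $g'(x) = -N^\alpha h^r_{\bar k}(x)$, so $g'(x)=0$ for $x\le\eta$ and $g'(x) = -N^\alpha(x-\eta)^r$ for $x>\eta$. Differentiating once more yields $g''(x) = -N^\alpha r(x-\eta)^{r-1}$ for $x>\eta$ and $g''(x)=0$ for $x<\eta$. The three structural facts I will use are: (i) both $g'$ and $g''$ are non-positive everywhere they are defined; (ii) for any argument inside the window $[\eta-\frac{1}{N},\eta+\frac{2}{N}]$, $|g'|\le N^\alpha(2/N)^r=\frac{2^r}{N^{r-\alpha}}$; and (iii) for any $x>\eta$ and any $y\ge x$, $|g''(x)|=N^\alpha r(x-\eta)^{r-1}\le N^\alpha r\,h^{r-1}_{\bar k}(y)$.

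Next I would bound \eqref{G-expansion-Gradient-1}. When $\sum_{i=1}^b S_i\in\mathcal T_1$, both $\xi$ and $\tilde\xi$ lie in the enlarged window $[\eta-\frac{1}{N},\eta+\frac{2}{N}]$, so $|g'(\xi)|,|g'(\tilde\xi)|\le \frac{2^r}{N^{r-\alpha}}$ by (ii). Also $\bigl|g'(\sum_{i=1}^b S_i)\cdot(-\frac{1}{N^\alpha})\bigr|=h^r_{\bar k}(\sum_{i=1}^b S_i)\le(1/N)^r$ on $\mathcal T_1$. Since $\lambda(1-A_b(S))\le 1$ and $S_1\le 1$, the integrand has absolute value at most $\frac{1}{N^r}+\frac{2\cdot 2^r}{N^{r-\alpha}}\le\frac{2^{r+1}}{N^{r-\alpha}}$, and multiplication by $\mathbb{I}_{\sum_{i=1}^b S_i\in\mathcal T_1}$ followed by taking expectation gives the first term in the target bound (using $\Pr(\mathcal T_1)\le 1$).

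For \eqref{G-expansion-Gradient-2}, the restriction to $\mathcal T_2$ forces $\zeta,\tilde\zeta>\eta$, where $g''$ is non-positive by (i). The minus sign in front of the expression then makes it non-negative and equal to $\frac{1}{2N}E\bigl[\bigl(\lambda(1-A_b(S))|g''(\zeta)|+S_1|g''(\tilde\zeta)|\bigr)\mathbb{I}_{\sum_{i=1}^b S_i\in\mathcal T_2}\bigr]$. Since $\zeta,\tilde\zeta\le\sum_{i=1}^b S_i+\frac{1}{N}$, fact (iii) gives $|g''(\zeta)|,|g''(\tilde\zeta)|\le N^\alpha r\,h^{r-1}_{\bar k}(\sum_{i=1}^b S_i+\frac{1}{N})$. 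Combining this with $\lambda(1-A_b)\le 1$, $S_1\le 1$, and dropping the indicator yields $\eqref{G-expansion-Gradient-2}\le\frac{r}{N^{1-\alpha}}E[h^{r-1}_{\bar k}(\sum_{i=1}^b S_i+\frac{1}{N})]$, which matches the second term exactly.

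The only subtle point is the piecewise nature of $g''$ at $x=\eta$, where for $r=1$ there is a jump discontinuity. The decomposition in Lemma \ref{lemma:gen-diff} is designed precisely to avoid this issue: the boundary window $\mathcal T_1$ is treated by a first-order expansion (producing the $g'$ terms in \eqref{G-expansion-Gradient-1}), while on $\mathcal T_2$ the intermediate points $\zeta$ and $\tilde\zeta$ lie strictly above $\eta$ so the second-order expansion is legitimate. I do not expect any deeper obstacle beyond bookkeeping the constant $2^{r+1}$ carefully; the main quantitative step is only the monotonicity of $(x-\eta)^{r-1}$ used in fact (iii), which holds for every positive integer $r$.
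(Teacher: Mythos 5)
Your approach is the same as the paper's: read $g'$ and $g''$ off the explicit solution of Stein's equation, bound $|g'|$ uniformly on a window around $\eta$ to control \eqref{G-expansion-Gradient-1}, and use the monotonicity of $(x-\eta)^{r-1}$ to bound $|g''(\zeta)|,|g''(\tilde\zeta)|\le rN^\alpha h^{r-1}_{\bar k}\left(\sum_{i=1}^b S_i+\frac{1}{N}\right)$ for \eqref{G-expansion-Gradient-2}; your treatment of the second term matches the paper's line for line, including the observation that on $\mathcal T_2$ the intermediate points stay strictly above $\eta$.

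The one flaw is in the $\mathcal T_1$ accounting. You bound the three pieces of the integrand by $\frac{1}{N^r}+2\cdot\frac{2^r}{N^{r-\alpha}}$ using $\lambda(1-A_b(S))\le 1$ and $S_1\le 1$, and then assert this is at most $\frac{2^{r+1}}{N^{r-\alpha}}$. Since $2\cdot 2^r=2^{r+1}$ already saturates the target, the claimed inequality $\frac{1}{N^r}+\frac{2^{r+1}}{N^{r-\alpha}}\le\frac{2^{r+1}}{N^{r-\alpha}}$ is false as written. The repair is exactly what the paper does: retain $\lambda(1-A_b(S))\le\lambda=1-N^{-\alpha}$, so the three coefficients multiplying the uniform bound $\frac{2^r}{N^{r-\alpha}}$ sum to $\frac{1}{N^\alpha}+\lambda+1=2$ (the paper bounds the first piece by $\frac{1}{N^\alpha}\left|g'\left(\sum_{i=1}^b S_i\right)\right|\le\frac{1}{N^\alpha}\cdot\frac{2^r}{N^{r-\alpha}}$ rather than by $N^{-r}$), which gives precisely $\frac{2^{r+1}}{N^{r-\alpha}}$. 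This is a one-line bookkeeping fix, not a missing idea, and it does not affect anything downstream.
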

The detailed proof of the lemma above can be found in Appendix \ref{app:g'}. Next, we consider the term \eqref{G-expansion-SSC}, which is based on the SSC result in Lemma \ref{ssc:tail}. According to Lemma \ref{ssc:tail}, we consider term \eqref{G-expansion-SSC} in two regions: $\Omega$ and its complementary $\bar \Omega$, where $$\Omega = \left\{ s ~\bigg |~ \min\left\{ \sum_{i=2}^b s_i - \frac{{\bar k}\log N}{N^{1-\alpha}}, 1-s_1 \right\} \leq \frac{1}{2N^\alpha} \right\}.$$ 

Considering \eqref{G-expansion-SSC} when $S\in \Omega,$ 
\begin{align*}
E\left[g'\left(\sum_{i=1}^b S_i\right)\left(\lambda A_b(S)- 1+S_1\right)\mathbb{I}_{\sum_{i=1}^b S_i \in \mathcal T_2}\mathbb{I}_{S \in \Omega}\right]
\end{align*}
we observe that $1-s_1\leq \frac{1}{2N^\alpha}$ which guarantees $(\lambda A_b(s)- 1+s_1)$ is small because $\sum_{i=1}^b s_i \in \mathcal T_2.$ 

Considering \eqref{G-expansion-SSC} when $S\in \bar \Omega,$ 
\begin{align*}
E\left[g'\left(\sum_{i=1}^b S_i\right)\left(\lambda A_b(S)- 1+S_1\right)\mathbb{I}_{\sum_{i=1}^b S_i \in \mathcal T_2}\mathbb{I}_{S \in \bar \Omega}\right]
\end{align*}
we apply the tail bound in Lemma \ref{ssc:tail}.

Collectively, we have the following lemma.
\begin{lemma}\label{lemma:ssc-term}
\begin{align}
\eqref{G-expansion-SSC} \leq \frac{1}{2}E\left[h^r_{\bar k}\left(\sum_{i=1}^{b} S_i\right)\right] 
+ \frac{b^r}{N^{2r-\alpha}}. \nonumber
\end{align} \qed
\end{lemma}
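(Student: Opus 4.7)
The plan is to split the expectation in \eqref{G-expansion-SSC} according to the SSC partition $\Omega\cup\bar\Omega$, where
\[
\Omega=\left\{s~\bigg|~\min\left\{\sum_{i=2}^{b}s_i-\tfrac{\bar k\log N}{N^{1-\alpha}},\,1-s_1\right\}\le\tfrac{1}{2N^\alpha}\right\},
\]
and handle each part using a different tool: a deterministic pointwise estimate on $\Omega$, and the tail bound in Lemma \ref{ssc:tail} on $\bar\Omega$. A preliminary step that I would do once and reuse is to solve Stein's equation \eqref{Gen:L} explicitly: $g'(x)=-N^\alpha h^r_{\bar k}(x)$, which in particular is nonpositive everywhere and satisfies $|g'(x)|\le N^\alpha b^r$, since $\sum_{i=1}^b s_i\le b$ implies $h_{\bar k}\bigl(\sum_{i=1}^{b}S_i\bigr)\le b$.

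For the contribution from $\Omega\cap\{\sum_i S_i\in\mathcal T_2\}$, the key observation is that on $\mathcal T_2$ one has $\sum_{i=1}^{b}s_i>\eta+\tfrac{1}{N}$, which rearranges to
\[
\sum_{i=2}^{b}s_i-\tfrac{\bar k\log N}{N^{1-\alpha}}>(1-s_1)+\tfrac{1}{N},
\]
so the minimum in the definition of $\Omega$ must be $1-s_1$. Hence on this set $1-S_1\le\tfrac{1}{2N^\alpha}$. Combined with $g'\le 0$ and $A_b(s)\ge 0$, the integrand satisfies
\[
g'\!\Bigl(\sum_{i=1}^{b}S_i\Bigr)\bigl(\lambda A_b(S)-1+S_1\bigr)\le -g'\!\Bigl(\sum_{i=1}^{b}S_i\Bigr)(1-S_1)\le N^\alpha h^r_{\bar k}\!\Bigl(\sum_{i=1}^{b}S_i\Bigr)\cdot\tfrac{1}{2N^\alpha},
\]
which upon taking expectation yields the $\tfrac{1}{2}E[h^r_{\bar k}(\sum_i S_i)]$ term (extending the indicator $\mathbb{I}_{S\in\Omega}\mathbb{I}_{\sum_i S_i\in\mathcal T_2}$ up to $1$ only enlarges the integral since $h^r_{\bar k}\ge 0$).

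For the contribution from $\bar\Omega$, I would apply crude worst-case bounds together with the probability bound $\Pr(S\in\bar\Omega)\le N^{-2r}$ from Lemma \ref{ssc:tail}. Using $|g'|\le N^\alpha b^r$ and $|\lambda A_b(S)-1+S_1|\le 1$, I get
\[
\Bigl|E\Bigl[g'\!\Bigl(\sum_{i=1}^{b}S_i\Bigr)\bigl(\lambda A_b(S)-1+S_1\bigr)\mathbb{I}_{\sum_i S_i\in\mathcal T_2}\mathbb{I}_{S\in\bar\Omega}\Bigr]\Bigr|\le N^\alpha b^r\cdot N^{-2r}=\tfrac{b^r}{N^{2r-\alpha}}.
\]
Adding the two pieces gives the claim. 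The only mildly subtle step is the pointwise argument on $\Omega$: it requires recognizing that the SSC partition interacts with $\mathcal T_2$ so that the binding constraint is $1-s_1\le\tfrac{1}{2N^\alpha}$ (rather than the queue-length term), and exploiting the sign of $g'$ so that the $\lambda A_b$ term can be discarded instead of bounded by $A_b\le s_b$, which would not suffice to produce the clean factor of $\tfrac{1}{2}$. The $\bar\Omega$ piece is a routine application of Lemma \ref{ssc:tail}.
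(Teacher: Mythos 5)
Your proposal is correct and follows essentially the same route as the paper's own proof: split \eqref{G-expansion-SSC} over $\Omega$ and $\bar\Omega$, observe that on $\mathcal T_2$ the minimum defining $V$ equals $1-s_1$ so that $1-S_1\le\frac{1}{2N^\alpha}$ on $\Omega$ (discarding the $\lambda A_b$ term via the sign of $g'$), and control the $\bar\Omega$ piece by the crude bound $|g'|\le N^\alpha b^r$ together with the tail estimate $\Pr(S\in\bar\Omega)\le N^{-2r}$ from Lemma \ref{ssc:tail}. No substantive differences from the paper's argument.
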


Recall that from Lemma \ref{lemma:gen-diff}, we have
$$E\left[h^r_{\bar k}\left(\sum_{i=1}^{b} S_i\right)\right] = \eqref{G-expansion-SSC}+\eqref{G-expansion-Gradient-1} + \eqref{G-expansion-Gradient-2}.$$
Applying Lemma \ref{lemma:g'+g''} and \ref{lemma:ssc-term} yields an upper bound on $E\left[h^r_{\bar k}\left(\sum_{i=1}^{b} S_i\right)\right]$ in terms of $E\left[h^{r-1}_{\bar k}\left(\sum_{i=1}^b S_i+\frac{1}{N}\right)\right],$ as stated in the following lemma.  

\begin{lemma}[Iterative Moment Bounds]\label{iter-r-th moment}
Assume $\lambda=1- N^{-\alpha},$ $0.5 \leq \alpha< 1.$ The following bound holds at steady-state for any positive integer $r$ such that:
\begin{align*}
E\left[h^{r}_{\bar k}\left(\sum_{i=1}^b S_i\right)\right]\leq \frac{2^{r+2}}{N^{r-\alpha}} + \frac{2r}{N^{1-\alpha}}E\left[h^{r-1}_{\bar k}\left(\sum_{i=1}^b S_i+\frac{1}{N}\right)\right]. 
\end{align*} \qed
\end{lemma}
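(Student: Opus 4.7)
The plan is to obtain this iterative moment bound by simply assembling the three preceding lemmas in the proof pipeline (Lemma \ref{lemma:gen-diff}, Lemma \ref{lemma:g'+g''}, and Lemma \ref{lemma:ssc-term}), which have already done the real analytical work. Since Lemma \ref{lemma:gen-diff} decomposes $E\bigl[h^r_{\bar k}(\sum_{i=1}^b S_i)\bigr]$ as the sum of the three terms \eqref{G-expansion-SSC}, \eqref{G-expansion-Gradient-1}, \eqref{G-expansion-Gradient-2}, my first step is to substitute the upper bound from Lemma \ref{lemma:g'+g''} for the ``gradient'' terms \eqref{G-expansion-Gradient-1}+\eqref{G-expansion-Gradient-2}, contributing $\frac{2^{r+1}}{N^{r-\alpha}}$ plus the ``one-step lookback'' term $\frac{r}{N^{1-\alpha}}\,E\bigl[h^{r-1}_{\bar k}(\sum_{i=1}^b S_i+\tfrac{1}{N})\bigr]$ that we want on the right-hand side.

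Next, I would substitute Lemma \ref{lemma:ssc-term} for the SSC-related term \eqref{G-expansion-SSC}. The crucial feature of that bound is that it produces $\tfrac{1}{2}\,E\bigl[h^r_{\bar k}(\sum_{i=1}^b S_i)\bigr]$ on the right-hand side, which is exactly half of the left-hand side; I would then move this term to the left and multiply through by $2$. This absorption is really the only non-cosmetic step: it is what allows the inductive scheme $r\mapsto r-1$ to close without self-referentially blowing up the moment bound.

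After rearrangement, the right-hand side contains two polynomially small pieces, $\frac{2b^r}{N^{2r-\alpha}}$ coming from the SSC remainder and $\frac{2^{r+2}}{N^{r-\alpha}}$ coming from the gradient terms, along with $\frac{2r}{N^{1-\alpha}}E\bigl[h^{r-1}_{\bar k}(\sum_{i=1}^b S_i+\tfrac{1}{N})\bigr]$. Since $b$ is a fixed constant and $r$ satisfies $\frac{N^{1-\alpha}}{32\log N}>r$, one has $\frac{2b^r}{N^{2r-\alpha}}\le\frac{2}{N^{r-\alpha}}\cdot\frac{b^r}{N^r}\ll \frac{2^{r+2}}{N^{r-\alpha}}$ for all sufficiently large $N$, so this residual is comfortably dominated and the two error terms collapse into the single coefficient $\frac{2^{r+2}}{N^{r-\alpha}}$ stated in the lemma.

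There is really no genuine obstacle here: all the delicate analysis (Taylor expansion of $g$, control of $g'$ and $g''$ on $\mathcal{T}_1\cup\mathcal{T}_2$, and the geometric tail bound feeding into SSC) has been paid for in the earlier lemmas, so what remains is bookkeeping of constants. The only point requiring mild care is checking that the condition $\frac{N^{1-\alpha}}{32\log N}>r$ is enough to make the constant absorption $\frac{2b^r}{N^{2r-\alpha}}\le\frac{2^{r+1}}{N^{r-\alpha}}$ valid uniformly in the range of $r$ being used; this is a purely arithmetic verification and does not require any new probabilistic input.
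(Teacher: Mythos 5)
Your proposal follows essentially the same route as the paper's own proof: decompose via Lemma \ref{lemma:gen-diff}, substitute the bounds from Lemmas \ref{lemma:g'+g''} and \ref{lemma:ssc-term}, absorb the $\frac{1}{2}E\bigl[h^r_{\bar k}\bigl(\sum_{i=1}^b S_i\bigr)\bigr]$ term into the left-hand side, and double. The only caveat is that after doubling the residual is $\frac{2^{r+2}}{N^{r-\alpha}}+\frac{2b^r}{N^{2r-\alpha}}\leq \frac{2^{r+2}+2}{N^{r-\alpha}}$ (using $b<N$) rather than exactly $\frac{2^{r+2}}{N^{r-\alpha}}$, a small constant slack that the paper's own proof carries as well and that is harmless for the subsequent iteration.
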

\begin{proof}
Given upper bound on \eqref{G-expansion-Gradient-1} + \eqref{G-expansion-Gradient-2} in Lemma \ref{lemma:g'+g''} and upper bound on  \eqref{G-expansion-SSC} in Lemma \ref{lemma:ssc-term}, we have
\begin{align}
E\left[h^r_{\bar k}\left(\sum_{i=1}^{b} S_i\right)\right]
\leq & \frac{1}{2} E\left[h^r_{\bar k}\left(\sum_{i=1}^{b} S_i\right)\right] 
 +\frac{b^r}{N^{2r-\alpha}}\nonumber\\
&+\frac{2^{r+1}}{N^{r-\alpha}} + \frac{r}{N^{1-\alpha}}E\left[h^{r-1}_{\bar k}\left(\sum_{i=1}^b S_i + \frac{1}{N}\right)\right] \nonumber\\
\leq & \frac{1}{2} E\left[h^r_{\bar k}\left(\sum_{i=1}^{b} S_i\right)\right]  +\frac{2^{r+1}+1}{N^{r-\alpha}} \nonumber\\
& + \frac{r}{N^{1-\alpha}}E\left[h^{r-1}_{\bar k}\left(\sum_{i=1}^b S_i+ \frac{1}{N}\right)\right]\label{move}
\end{align} 
where the second inequality holds because
$$\frac{b^r}{N^{2r-\alpha}} \leq  \frac{1}{N^{r-\alpha}},$$ because $b < N.$

By moving the term $\frac{1}{2}E\left[h^{r}_{\bar k}\left(\sum_{i=1}^b S_i\right)\right]$ in \eqref{move} to the left-hand side, we have 
\begin{align*}
E\left[h^{r}_{\bar k}\left(\sum_{i=1}^b S_i\right)\right]\leq \frac{2^{r+2}+2}{N^{r-\alpha}} + \frac{2r}{N^{1-\alpha}}E\left[h^{r-1}_{\bar k}\left(\sum_{i=1}^b S_i+ \frac{1}{N}\right)\right].
\end{align*}
\end{proof}

\subsection{Proving Theorem \ref{Thm:main}}
We iteratively use Lemma \ref{iter-r-th moment} to establish Theorem \ref{Thm:main}. Define $w_r = \frac{2r}{N^{1-\alpha}}$ and $z_r = \frac{2^{r+2}+2}{N^{r-\alpha}},$ the inequality in Lemma \ref{iter-r-th moment} can be written as 
\begin{align*}
E\left[h^{r}_{\bar k}\left(\sum_{i=1}^b S_i\right)\right]\leq w_r E\left[h^{r-1}_{\bar k}\left(\sum_{i=1}^b S_i+ \frac{1}{N}\right)\right] + z_r,
\end{align*} where $k - \frac{r}{N^\alpha \log N} \leq \bar k \leq k.$ Continuing expanding the term $E\left[h^{r-1}_{\bar k}\left(\sum_{i=1}^{b} S_i+\frac{1}{N}\right)\right]$ yields  
\begin{align*}
E\left[h^{r}_k\left(\sum_{i=1}^{b} S_i\right)\right] &\leq \prod_{j=1}^r w_j  + \sum_{i=1}^{r-1} z_i \prod_{j=i+1}^r w_j + z_r\\
&\leq_{(a)} \prod_{j=1}^r w_j  + r z_1 \prod_{j=2}^r w_j\\
&\leq_{(b)} (r+1) z_1 \prod_{j=2}^r w_j \\
&\leq_{(c)} (r+1) z_1(w_r)^{r-1}\\
&\leq 10 \left(\frac{2r}{N^{1-\alpha}}\right)^r
\end{align*}
where $(a)$ holds because $z_i \prod_{j=i+1}^r w_j$ is decreasing for $1 \leq i \leq r-1$ because 
\begin{align*}
\frac{z_i \prod_{j=i+1}^r w_j}{z_{i-1} \prod_{j=i}^r w_j} = \frac{z_{i}}{z_{i-1} w_{i}} 
= \frac{\frac{2^{i+2}+2}{N^{i-\alpha}}}{\frac{2^{i+1}+2}{N^{i-1-\alpha}}\frac{2i}{N^{1-\alpha}}} \leq \frac{1}{2iN^\alpha} \leq 1;
\end{align*}
$(b)$ holds because $w_1=\frac{2}{N^{1-\alpha}} \leq z_1=\frac{10}{N^{1-\alpha}}$ implies $$\prod_{j=1}^r w_j  \leq z_1 \prod_{j=2}^r w_j;$$ and $(c)$ holds because $w_r$ is increasing in $r.$

\section{Proof of Corollary \ref{Thm:S3}}

To prove Corollary \ref{Thm:S3}, we first have the following lemma on $E[S_3].$
\begin{lemma} \label{lemma:S3}
At steady-state, we have   
$$E[S_3] = \lambda E[A_2(S) - A_b(S)].$$ \qed
\end{lemma}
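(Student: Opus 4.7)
The plan is to use the generator of the CTMC $(S(t):t\geq 0)$ applied to the coordinate functions $f_i(s)=s_i$, combined with a telescoping sum, rather than attacking $E[S_3]$ directly.

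First I would write the one-step transition rates affecting $S_i$ for a fixed $i$ with $3\leq i \leq b$. The quantity $N S_i$ (the number of servers with at least $i$ jobs) increases by one exactly when an arrival is dispatched to a server that currently has exactly $i-1$ jobs; since $A_j(s)$ is the probability that an arrival is routed to a server with at least $j$ jobs, the probability of landing on a server with exactly $i-1$ jobs is $A_{i-1}(s)-A_i(s)$, so the up-rate is $\lambda N\bigl(A_{i-1}(s)-A_i(s)\bigr)$. Symmetrically, $NS_i$ decreases by one only when a server with exactly $i$ jobs completes a service (a completion at a server with more than $i$ jobs leaves $S_i$ unchanged), so the down-rate is $N(s_i-s_{i+1})$. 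This gives
\begin{equation*}
G f_i(s) \;=\; \lambda\bigl(A_{i-1}(s)-A_i(s)\bigr) \;-\; \bigl(s_i-s_{i+1}\bigr).
\end{equation*}

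Next, since $f_i$ is bounded on the finite state space $\mathbb S$, stationarity yields $E[Gf_i(S)]=0$, and hence the per-level balance identity
\begin{equation*}
\lambda\, E\bigl[A_{i-1}(S)-A_i(S)\bigr] \;=\; E\bigl[S_i - S_{i+1}\bigr], \qquad 3\leq i\leq b.
\end{equation*}
I would then sum these identities over $i=3,\ldots,b$. The left-hand side telescopes to $\lambda E\bigl[A_2(S)-A_b(S)\bigr]$, and the right-hand side telescopes to $E[S_3-S_{b+1}]$. Invoking the convention $S_{b+1}\equiv 0$ (stated in Section~2 because of the buffer size $b-1$), the right-hand side collapses to $E[S_3]$, which is exactly the claimed identity.

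I do not expect any serious obstacle here: the main things to get right are (i) the bookkeeping of which transitions change $NS_i$ and by how much—in particular noting that completions at servers with strictly more than $i$ jobs and arrivals routed to servers with at least $i$ jobs both leave $S_i$ unchanged—and (ii) the correct telescoping, which relies on the boundary conventions $S_{b+1}=0$ and on the fact that blocking is already folded into $A_b(s)$ (so no separate blocking term appears on the up-side of the balance). Once the balance equation for each level is written down, the lemma is just one line of summation.
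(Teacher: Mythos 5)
Your proposal is correct and is essentially the same argument as the paper's: the paper applies the generator to the single test function $f(s)=\sum_{i=3}^b s_i$ and telescopes inside the generator sum, whereas you apply it to each coordinate $f_i(s)=s_i$ and telescope the resulting per-level balance identities, which by linearity of $G$ is the identical computation. The transition-rate bookkeeping, the use of $E[Gf(S)]=0$, and the boundary convention $S_{b+1}\equiv 0$ all match the paper's proof.
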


The proof of Lemma \ref{lemma:S3} is in Appendix \ref{app:S3}. The key step is to choose a proper test function $f(s)=\sum_{i=3}^b s_i,$ and use the steady-state equation $E[Gf(S)]=0$.

From the lemma above, we have 
\begin{align}
E[S_3] \leq &  E\left[A_2(S)\right] \nonumber\\
       =&  E\left[A_2(S) | S_2 \geq 0.95\right]\Pr(S_2 \geq 0.95) \nonumber\\
       &+ E\left[A_2(S) | S_2 < 0.95\right]\Pr(S_2 < 0.95) \nonumber\\
       \leq& \Pr(S_2 \geq 0.95)+E\left[A_2(S) | S_2 < 0.95\right]. \label{S3_bounds}
\end{align}

The probability in \eqref{S3_bounds} can be upper bounded as follows: 
\begin{align*}
\Pr(S_2 \geq 0.95)
\leq&\Pr(S_1 + S_2 \geq 1.9)\\
\leq& \Pr\left(h_k\left(\sum_{i=1}^bS_i\right)\geq 0.9-\frac{k\log N}{N^{1-\alpha}}\right)\\
=& \Pr\left(h^r_k\left(\sum_{i=1}^bS_i\right)\geq \left(0.9-\frac{k\log N}{N^{1-\alpha}}\right)^r\right)\\
\leq& \frac{E\left [h^r_k\left(\sum_{i=1}^bS_i\right)\right ]}{\left(0.9-\frac{k\log N}{N^{1-\alpha}}\right)^r}\\
\leq& 10 \left(\frac{3r}{N^{1-\alpha}}\right)^r
\end{align*}
where the last inequality holds because $\frac{N^{1-\alpha}}{k\log N} \geq 5.$

The conditional expectation in \eqref{S3_bounds} is bounded
$$E\left[A_2(S) | S_2 < 0.95\right]\leq 10\left(\frac{2r}{N^{1-\alpha}}\right)^r$$
for any load balancing algorithms in $\Pi.$

\section{Proof of Corollary \ref{Thm:zerodelay}} \label{sec:0-delay}

We prove Corollary \ref{Thm:zerodelay} with the following steps: $(i)$ bound the blocking probability $p_{\mathcal B};$ $(ii)$ study the expected waiting time $E[W]$ based on $p_{\mathcal B};$ and $(iii)$ study the waiting probability $p_{\mathcal W}$ based on $p_{\mathcal B}$ and $E[W].$

Define $\delta_b = \sqrt{10} \left(\frac{3r}{N^{1-\alpha}}\right)^{\frac{r}{2}}.$ We next study the blocking probability $p_{\mathcal B}$ by considering two regions:
\begin{align*}
p_{\mathcal B}=&\Pr\left(\mathcal B\left|S_b\leq \delta_b\right.\right)\Pr\left(S_b\leq \delta_b\right)\\
&+\Pr\left(\mathcal B\left|S_b> \delta_b\right.\right)\Pr\left(S_b> \delta_b \right)\\
\leq& \Pr\left(\mathcal B\left|S_b\leq \delta_b\right.\right) + \Pr\left(S_b> \delta_b \right).
\end{align*}
For load balancing in $\Pi$, we have
\begin{align*}
p_{\mathcal B} \leq& \delta_b + \Pr\left(S_b> \delta_b \right) \\
\leq& \delta_b + \Pr\left(S_3> \delta_b \right) \\
\leq& \delta_b + \frac{E[S_3]}{\delta_b} \\
\leq& 10 \left(\frac{3r}{N^{1-\alpha}}\right)^{\frac{r}{2}},
\end{align*}
{where the first inequality holds because $A_b(s) \leq s_b$ for any load balancing algorithm in $\Pi$}, the third inequality holds due to the Markov inequality, and the last inequality holds because of the upper bound on $E[S_3]$ established in Corollary \ref{Thm:S3}.

For jobs that are not discarded, the average queueing delay according to Little's law is
$$\frac{E\left[\sum_{i=1}^bS_i\right]}{\lambda(1-p_{\mathcal B})}.$$ Therefore, the average waiting time is
\begin{align*}
E[W]=&\frac{E\left[\sum_{i=1}^bS_i\right]}{\lambda(1-p_{\mathcal B})}-1 \\
\leq& \frac{1+\frac{k\log N}{N^{1-\alpha}}+\frac{20}{N^{1-\alpha}}}{\lambda(1-p_{\mathcal B})}-1\\
=&\frac{\frac{k\log N}{N^{1-\alpha}}+\frac{20}{N^{1-\alpha}}+\frac{1}{N^\alpha}+\lambda p_{\mathcal B}}{\lambda(1-p_{\mathcal B})}\\
\leq& \frac{3k\log N}{\lambda(1-p_{\mathcal B})}  \leq \frac{4k\log N}{N^{1-\alpha}}
\end{align*}
where the first inequality holds by letting $r=1$ in Theorem \ref{Thm:main}, and the last inequality holds due to the upper bound on $p_{\mathcal B}$ for a large $N$ such that $N^{1-\alpha} \geq 3 (40)^{\frac{2}{r}}r.$

From the work conservation law, we have 
$$E[S_1]=\lambda(1-p_{\mathcal B}),$$ which implies
$$\lambda - 10 \left(\frac{3r}{N^{1-\alpha}}\right)^{\frac{r}{2}} \leq E[S_1]\leq \lambda.$$ 
Now according to Theorem \ref{Thm:main}, we have $$E\left[\sum_{i=1}^b {S}_i\right]\leq 1 +\frac{k\log N}{N^{1-\alpha}}+\frac{20}{N^{1-\alpha}},~~ k = 32 rb,$$ which results in the upper bound  on $E[S_2]:$ $$E[S_2]\leq E\left[\sum_{i=2}^b S_i\right]\leq 10 \left(\frac{3r}{N^{1-\alpha}}\right)^{\frac{r}{2}} +\frac{(k+20)\log N}{N^{1-\alpha}}.$$

We now study the waiting probability $p_{\mathcal W}$. Define $\overline{\mathcal W}$ to be the event that a job is not blocked and $p_{\overline{\mathcal W}}$ to be the steady-state probability of $\overline{\mathcal W}$. {Applying Little's law to the jobs waiting in the buffer yields  $$\lambda p_{\overline{\mathcal W}} E[T_{Q}] = E\left[\sum_{i=2}^b S_i\right],$$ where $T_{Q}$ is the waiting time for the jobs waiting in the buffer.}
Since $E[T_{Q}]$ is lower bounded by one, we have
$$p_{\overline{\mathcal W}} \leq \frac{E\left[\sum_{i=2}^b S_i\right]}{\lambda}.$$
Finally, a job that is not routed to an idle server is either blocked or is routed to wait in a buffer, so 
\begin{align*}
p_{\mathcal W} =&  p_{\mathcal B}+p_{\overline{\mathcal W}} \\
\leq& p_{\mathcal B}+\frac{E\left[\sum_{i=2}^b S_i\right]}{\lambda} \\
\leq& 20 \left(\frac{3r}{N^{1-\alpha}}\right)^{\frac{r}{2}} +\frac{2k\log N}{N^{1-\alpha}}.
\end{align*}

\section{Conclusion}
In this paper, we established moment bounds for a set ($\Pi$) of load balancing algorithms in heavy-traffic regimes.
The set $\Pi$ includes JSQ, I1F and Po$d$ ($d \geq N^\alpha \log^2 N$). Under any algorithm in ${\Pi},$ the expected waiting time and the waiting probability  of an incoming job is asymptotically zero. We further established universal scaling properties of the steady-state queues under any algorithm in ${\Pi}.$

\bibliographystyle{ACM-Reference-Format}
\bibliography{inlab-refs}

\appendix
\section{Proof of $Gg(\sum_{i=1}^b s_i)$ in (\ref{Gen:G}).} \label{app:gen-diff}
Let $e_i$ to be a $b$-dimension vector with the $i$th entry to be $1/N$ and others are zeros. Recall $A_i(s)$ denotes the probability that an incoming job is routed to a server with at least $i$ jobs when the system is in state $s \in \mathbb S.$
Since $G$ is the generator of CTMC ($S(t):t\geq 0$), given function $f: \mathbb S \to \mathbb R,$ we have
\begin{align}
G f(s) = \sum_{i=1}^{b}&\lambda N (A_{i-1}(s)-A_i(s)) (f(s + e_i) - f(s)) \nonumber\\
&+  N (s_i - s_{i+1}) (f(s - e_i)-f(s)). \label{Gen:f}
\end{align}  

Now define
\begin{equation}
f(s)=g\left(\sum_{i=1}^b s_i\right). \label{eq:steinsolution}
\end{equation} Substituting $f(s)$ in \eqref{eq:steinsolution} into \eqref{Gen:f}, we obtain $G g(\sum_{i=1}^b s_i)$ such that
\begin{align}
G g\left(\sum_{i=1}^b s_i\right) =& \sum_{i=1}^{b}\lambda N (A_{i-1}(s)-A_i(s)) \left(g\left(\sum_{i=1}^b s_i + \frac{1}{N}\right) - g\left(\sum_{i=1}^b s_i\right)\right) \nonumber\\
&+  N (s_i - s_{i+1}) \left(g\left(\sum_{i=1}^b s_i - \frac{1}{N}\right)-g\left(\sum_{i=1}^b s_i\right)\right) \nonumber\\
=&  \lambda N (1-A_b(s)) \left(g\left(\sum_{i=1}^b s_i + \frac{1}{N}\right) - g\left(\sum_{i=1}^b s_i\right)\right) \nonumber\\
&+  N s_1 \left(g\left(\sum_{i=1}^b s_i - \frac{1}{N}\right)-g\left(\sum_{i=1}^b s_i\right)\right).\nonumber
\end{align}

\section{Proof of Lemma \ref{lemma:gen-diff}} \label{app:gen}

Given Equation \eqref{Gen:L}, we obtain its solution to be
\begin{equation}g(x)=-\frac{N^\alpha}{r+1}\left(x-1-\frac{\bar k\log N}{N^{1-\alpha}}\right)^{r+1} \mathbb{I}_{x\geq 1+\frac{\bar k\log N}{N^{1-\alpha}}},\label{eq:L}\end{equation} and
\begin{equation}g'(x)=-N^\alpha\left(x-1-\frac{\bar k\log N}{N^{1-\alpha}}\right)^{r} \mathbb{I}_{x\geq 1+\frac{\bar k\log N}{N^{1-\alpha}}}.\label{eq:Lder}\end{equation}

Recall $\eta = 1 +\frac{\bar k\log N}{N^{1-\alpha}},$ $\mathcal T_1 = \{ x ~|~ \eta - \frac{1}{N} \leq x \leq \eta + \frac{1}{N}\}$ and $\mathcal T_2 = \{ x ~|~  x > \eta + \frac{1}{N}\}.$
From the closed-forms of $g$ and $g'$ in \eqref{eq:L} and \eqref{eq:Lder}, we note that for any $x <  \eta,$
\begin{equation*}
g(x) = g'\left(x\right)=0.
\end{equation*} Also note that when $x>\eta+\frac{1}{N},$
\begin{equation}g'(x)=-N^\alpha\left(x-\eta\right)^{r},\end{equation} so for $x>\eta+\frac{1}{N},$
\begin{equation}g''(x)=-rN^\alpha\left(x-\eta\right)^{r-1}.\end{equation}

By using the mean-value theorem in region $[\eta-\frac{1}{N}, \eta+\frac{1}{N}]$ and the Taylor theorem in region $(\eta+\frac{1}{N}, \infty)$, we have
\begin{align}
g(x+\frac{1}{N})-g\left(x\right) 
=& \left(g(x+\frac{1}{N})-g\left(x\right)\right) \left(\mathbb{I}_{ x \in \mathcal T_1 } + \mathbb{I}_{ x \in \mathcal T_2 }\right) \nonumber\\
=&  \frac{g'(\xi)}{N}\mathbb{I}_{x \in \mathcal T_1} + \left(\frac{g'(x)}{N} +  \frac{g''(\zeta)}{2N^2}\right) \mathbb{I}_{ x \in \mathcal T_2 } \label{g+1/N}
\end{align}
and
\begin{align}
g(x-\frac{1}{N})-g\left(x\right) 
=& \left(g(x-\frac{1}{N})-g\left(x\right)\right) \left(\mathbb{I}_{ x \in \mathcal T_1 } + \mathbb{I}_{ x \in \mathcal T_2 }\right) \nonumber\\
=&  -\frac{g'(\tilde{\xi})}{N}1_{x \in \mathcal T_1} + \left(-\frac{g'(x)}{N} +  \frac{g''(\tilde{\zeta})}{2N^2}\right) \mathbb{I}_{ x \in \mathcal T_2 } \label{g-1/N}
\end{align}
where $\xi, \zeta \in (x,x+\frac{1}{N})$ and $\tilde{\xi}, \tilde{\zeta} \in (x-\frac{1}{N},x).$

Based on \eqref{g+1/N} and \eqref{g-1/N}, we have 
\begin{align*}
G g\left(x\right) =& \lambda (1-A_b(s)) \left( g'(\xi) \mathbb{I}_{x \in \mathcal T_1} + \left( g'(x) +  \frac{g''(\zeta)}{2N}\right) \mathbb{I}_{ x \in \mathcal T_2 }  \right) \\
&+s_1 \left( -g'(\xi)\mathbb{I}_{x \in \mathcal T_1} + \left(-g'(x) +  \frac{g''(\zeta)}{2N}\right) \mathbb{I}_{ x \in \mathcal T_2 } \right). 
\end{align*}

Considering the simple system in \eqref{Gen:L}, we have $g'(x) = 0, \forall x < \eta - \frac{1}{N}$ that 
\begin{align*}
g'\left(x\right)\left(-\frac{1}{N^\alpha}\right) = g'\left(x\right) \left(-\frac{1}{N^\alpha}\right) \left( \mathbb{I}_{ x \in \mathcal T_1} + \mathbb{I}_{ x \in \mathcal T_2} \right).
\end{align*}

From the results above, we have 
\begin{align*}
&E\left[h^r_{\bar k}\left(\sum_{i=1}^b S_i\right)\right]\nonumber\\
=&E\left[g'\left(\sum_{i=1}^b S_i\right)\left(-\frac{1}{N^\alpha}\right)\right]+E\left[G g\left(\sum_{i=1}^b S_i\right)\right]\nonumber\\
= &E\left[g'\left(\sum_{i=1}^b S_i\right)\left(\lambda A_b(S)- 1+S_1\right)\mathbb{I}_{\sum_{i=1}^b S_i \in \mathcal T_2}\right] \\
&+E\left[\left(g'\left(\sum_{i=1}^b S_i\right)\left(-\frac{1}{N^\alpha}\right)-\lambda(1-A_b(S))g'(\xi) + S_1g'(\tilde{\xi})\right)\mathbb{I}_{\sum_{i=1}^b S_i \in \mathcal T_1}\right] \\
&-E\left[\frac{1}{2N}\left(\lambda(1-A_b(S))g''(\zeta) + S_1g''(\tilde{\zeta})\right)\mathbb{I}_{\sum_{i=1}^b S_i \in \mathcal T_2}\right],
\end{align*}
where we have random variables $\xi,\zeta  \in \left(\sum_{i=1}^b S_i,\sum_{i=1}^b S_i+\frac{1}{N}\right)$ and $\tilde{\xi},\tilde{\zeta}\in \left(\sum_{i=1}^b S_i-\frac{1}{N},\sum_{i=1}^b S_i\right)$ whose values depend on $\sum_{i=1}^b S_i.$ 

\section{Proof of Lemma \ref{lemma:g'+g''}} \label{app:g'}

For any $x\in\left[ 1 +\frac{\bar k\log N}{N^{1-\alpha}}  -\frac{2}{N}, 1 +\frac{\bar k\log N}{N^{1-\alpha}}  +\frac{2}{N}\right],$ from \eqref{Gen:L},  we obtain
$$|g'(x)| \leq \frac{|x-1 -\frac{\bar k\log N}{N^{1-\alpha}}|^r}{\frac{1}{N^\alpha}}\leq \frac{\left(\frac{2}{N}\right)^r}{\frac{1}{N^\alpha}}=\frac{2^r}{N^{r-\alpha}},$$
which implies the term \eqref{G-expansion-Gradient-1} is bounded by
\begin{align*}
&E\left[\left(g'\left(\sum_{i=1}^b S_i\right)\left(-\frac{1}{N^{\alpha}}\right)-\lambda(1-A_b(S))g'(\xi) + S_1g'(\tilde{\xi})\right)\mathbb{I}_{\sum_{i=1}^b S_i \in \mathcal T_2} \right]\nonumber\\
\leq& \left(\lambda+\frac{1}{N^{\alpha}}+1\right)\frac{2^r}{N^{r-\alpha}} \leq \frac{2^{r+1}}{N^{r-\alpha}}. 
\end{align*} 

For $x> 1 +\frac{\bar k\log N}{N^{1-\alpha}},$ we have
$$g'(x)=\frac{\left(x-1 -\frac{\bar k\log N}{N^{1-\alpha}}\right)^r}{-\frac{1}{N^{\alpha}}},$$ which implies
\begin{eqnarray*}
g''(x)= \frac{r\left(x-1 -\frac{\bar k\log N}{N^{1-\alpha}}\right)^{r-1}}{-\frac{1}{N^{\alpha}}}.
\end{eqnarray*}
and
\begin{align*}
|g''(x)|&=\left|\frac{r\left(x-1 -\frac{\bar k\log N}{N^{1-\alpha}}\right)^{r-1}}{-\frac{1}{N^{\alpha}}}\right|\\
&= r N^{\alpha} \left(\max\left\{x-1 -\frac{\bar k\log N}{N^{1-\alpha}}, 0\right\}\right)^{r-1} \\
&= r N^{\alpha} h^{r-1}_{\bar k}\left(x\right). 
\end{align*}
Therefore, we have the following bound on \eqref{G-expansion-Gradient-2} 
\begin{align*}
-&E\left[\frac{1}{2N}\left(\lambda(1-A_b(S))g''(\zeta) + S_1g''(\tilde{\zeta})\right)\mathbb{I}_{ \sum_{i=1}^b S_i \in \mathcal T_2 }\right] \nonumber\\
\leq& E\left[\frac{1}{2N}\left(\lambda|g''(\zeta)| + S_1|g''(\tilde{\zeta})|\right)\mathbb{I}_{ \sum_{i=1}^b S_i \in \mathcal T_2 }\right] \nonumber \\
\leq& \frac{r E\left[h^{r-1}_{\bar  k}\left(\sum_{i=1}^b S_i+\frac{1}{N}\right)\right]}{N^{1-\alpha}}. 
\end{align*} 

\section{Proof of SSC in Lemma \ref{ssc:tail}} \label{app:ssc}
To prove Lemma \ref{ssc:tail}, we consider Lyapunov function
\begin{align}
V(s)=\min\left\{ \sum_{i=2}^b s_i - \frac{\bar k\log N}{N^{1-\alpha}}, 1-s_1 \right\}, \label{ly-func}
\end{align}
to study its drift in the following lemma.

\begin{lemma}\label{DriftBound}
Given any load balancing in $\Pi,$ we have 
\begin{align*}
\triangledown V(s)\leq \frac{2}{\sqrt{N}} -\frac{k}{b}\frac{\log N}{N^{1-\alpha}},
\end{align*}
for any state $s \in \mathbb S$ such that
\begin{align*}
V(s)\geq \frac{1}{4N^{\alpha}}.
\end{align*}
\end{lemma}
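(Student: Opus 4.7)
\medskip
\noindent\textbf{Proof proposal.}
The Lyapunov function is $V(s)=\min\{f(s),g(s)\}$ with $f(s)=\sum_{i=2}^{b}s_i-\frac{\bar k\log N}{N^{1-\alpha}}$ and $g(s)=1-s_1$. The plan is to exploit the elementary fact that for any generator $G$ and any state $s$, $GV(s)\le Gf(s)$ whenever $f(s)\le g(s)$ and $GV(s)\le Gg(s)$ whenever $g(s)\le f(s)$, because for each reachable successor $s'$ we have $\min\{f(s'),g(s')\}\le f(s')$ (resp.\ $g(s')$). Hence it suffices to compute the drift of each affine component, substitute the information already available from the definition of $\Pi$ and from the region $\{V(s)\ge 1/(4N^{\alpha})\}$, and check that in either case we obtain the claimed bound.

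From the same CTMC transitions used to derive \eqref{Gen:G} (arrivals to idle servers increase $s_1$ and arrivals to servers with $1,\dots,b-1$ jobs increase $\sum_{i=2}^{b}s_i$; departures decrease the corresponding coordinates), one gets
\begin{align*}
Gs_1 &= \lambda\bigl(1-A_1(s)\bigr)-(s_1-s_2),\\
G\sum_{i=2}^{b}s_i &= \lambda\bigl(A_1(s)-A_b(s)\bigr)-s_2.
\end{align*}
In \textbf{Case A}, $V(s)=g(s)\le f(s)$. The hypothesis $V(s)\ge \frac{1}{4N^{\alpha}}$ gives $s_1\le 1-\frac{1}{4N^{\alpha}}$, so by the definition of $\Pi$ we have $A_1(s)\le \frac{1}{\sqrt N}$. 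The case condition rearranges to $\sum_{i=2}^{b}s_i\ge 1-s_1+\frac{\bar k\log N}{N^{1-\alpha}}$, and the monotonicity $s_2\ge s_3\ge\cdots\ge s_b$ yields $s_2\ge \frac{1}{b-1}\sum_{i=2}^{b}s_i$. Plugging these into $Gg(s)=-Gs_1=-\lambda(1-A_1(s))+(s_1-s_2)$, using $\lambda=1-N^{-\alpha}$ and $N^{-\alpha}\le 1/\sqrt N$ (because $\alpha\ge 1/2$), and bounding $s_1\le 1$ in the $(s_1-s_2)$ term gives
\begin{equation*}
GV(s)\le \frac{2}{\sqrt N}-\frac{\bar k\log N}{(b-1)N^{1-\alpha}}.
\end{equation*}

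In \textbf{Case B}, $V(s)=f(s)\le g(s)$. Here the hypothesis $V(s)\ge \frac{1}{4N^{\alpha}}$ gives $\sum_{i=2}^{b}s_i\ge \frac{\bar k\log N}{N^{1-\alpha}}+\frac{1}{4N^{\alpha}}$, and the case condition $f(s)\le g(s)$ then forces $s_1\le 1-\frac{1}{4N^{\alpha}}$, so again $A_1(s)\le \frac{1}{\sqrt N}$ by the $\Pi$ condition. Combining the lower bound $s_2\ge \frac{1}{b-1}\sum_{i=2}^{b}s_i\ge \frac{\bar k\log N}{(b-1)N^{1-\alpha}}$ with $Gf(s)=\lambda(A_1(s)-A_b(s))-s_2\le \frac{1}{\sqrt N}-s_2$ yields exactly the same upper bound as in Case A. To convert $\frac{\bar k}{b-1}$ into $\frac{k}{b}$ in the final answer, I would use $\bar k\ge k-\frac{r}{N^{\alpha}\log N}$ and note that the inequality $\frac{\bar k}{b-1}\ge \frac{k}{b}$ is equivalent to $k-\bar k\le k/b$, which holds for $N$ large enough (and in particular under the lemma's hypothesis $\frac{N^{1-\alpha}}{32\log N}>r$ once one recalls $k=32rb+1$).

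The only real subtlety is the nonsmoothness of $V$, which is handled cleanly by the ``drift of a minimum'' observation above; the rest is a bookkeeping exercise. The one step that deserves attention is verifying that Case B actually implies $s_1\le 1-1/(4N^{\alpha})$ (so that $A_1(s)\le 1/\sqrt N$ can be used), which follows directly from $s_1\le 1-\sum_{i=2}^{b}s_i+\frac{\bar k\log N}{N^{1-\alpha}}$ combined with the lower bound on $\sum_{i=2}^{b}s_i$ coming from $V(s)\ge 1/(4N^{\alpha})$.
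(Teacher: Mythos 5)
Your proposal is correct and follows essentially the same route as the paper's proof: a two-case analysis according to which branch of the minimum is active, using $A_1(s)\le 1/\sqrt N$ from the definition of $\Pi$ and the monotonicity bound $s_2\ge\frac{1}{b-1}\sum_{i=2}^b s_i$ (the paper uses the slightly weaker $1/b$ factor, which makes the $\bar k\to k$ conversion immediate, whereas you absorb it via $\frac{\bar k}{b-1}\ge\frac{k}{b}$ — both work). Your explicit justification of the ``drift of a minimum'' step is a nice touch that the paper leaves implicit.
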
 \qed
\begin{proof}

Given $V(s) \geq \frac{1}{4N^{\alpha}},$ we have two cases.
\begin{itemize}
\item Case 1: 
$1-s_1 \geq V(s)= \sum_{i=2}^b s_i - \frac{\bar k\log N}{N^{1-\alpha}} \geq \frac{1}{4N^{\alpha}}.$ 

In this case, we have
\begin{align*}
\triangledown V(s) \leq& \lambda(A_1(s)- A_b(s))-s_2 \\
\leq& \frac{1}{\sqrt{N}} - s_2 \\
\leq& \frac{1}{\sqrt{N}} -\frac{1}{4bN^{\alpha}} - \frac{\bar k}{b}\frac{\log N}{N^{1-\alpha}} \\
\leq& \frac{1}{\sqrt{N}} - \frac{k}{b}\frac{\log N}{N^{1-\alpha}}
\end{align*} 
where the second inequality holds because we consider load balancing in $\Pi,$ and the third inequality holds because $s_2 \geq \frac{\sum_{i=2}^b s_i}{b} \geq  \frac{1}{4bN^{\alpha}} + \frac{\bar k}{b}\frac{\log N}{N^{1-\alpha}}$.

\item Case 2: 
$\sum_{i=2}^b s_i - \frac{\bar k\log N}{N^{1-\alpha}}\geq V(s)=1-s_1\geq \frac{1}{4N^{\alpha}}.$  
In this case, we have
\begin{align*}
\triangledown V(s)\leq&-\lambda(1-A_1(s))+(s_1-s_2)\\
=& s_1-s_2-\lambda+\lambda A_1(s)\\
\leq& \frac{3}{4N^{\alpha}}-s_2+\lambda A_1(s)\\
\leq& \frac{1}{\sqrt{N}} + \frac{3}{4N^{\alpha}} -\frac{1}{4bN^{\alpha}} - \frac{\bar k}{b}\frac{\log N}{N^{1-\alpha}} \\
\leq& \frac{2}{\sqrt{N}} - \frac{k}{b}\frac{\log N}{N^{1-\alpha}}
\end{align*}
\end{itemize}
where the second inequality holds because $s_1 \leq 1-\frac{1}{4N^\alpha},$ and the third inequality holds because we consider load balancing in $\Pi$ and $s_2 \geq \frac{\sum_{i=2}^b s_i}{b} \geq  \frac{1}{4bN^{\alpha}} + \frac{\bar k}{b}\frac{\log N}{N^{1-\alpha}}.$
\end{proof}

The drift analysis in Lemma \ref{DriftBound} says that once the system at state $s$ such that $s_1 \leq 1 - \frac{1}{4N^\alpha}$ and $\sum_{i=2}^b s_i \geq \frac{\bar k\log N}{N^{1-\alpha}}+\frac{1}{4N^\alpha},$ Lyapunov function has a negative drift in the order of $\frac{\log N}{N^{1-\alpha}}$, which is related to the choice of $\frac{\bar k\log N}{N^{1-\alpha}}$ in the truncated distance function $h_{\bar k}(\cdot).$
\cite{BerGamTsi_01} has shown that Lyapunov
drift analysis can be used to obtain geometric upper bounds
on the tail probability of discrete-time Markov chain at steady state (Theorem 1 in \cite{BerGamTsi_01}). which can be used to obtain geometric-type upper bounds for continuous-time Markov chains by uniformization of CTMC (see Lemma 4.1 in \cite{WanMagSri_18}). To make the paper self-contained, we state Lemma 4.1 in \cite{WanMagSri_18} next and use it to prove Lemma \ref{ssc:tail}.

\begin{lemma} \label{TailBound}
Let $(X (t): t \geq 0)$ be a continuous-time Markov chain over a countable state space $\mathbb X$. Suppose that it is irreducible, nonexplosive and positive-recurrent, {and $X$ denotes the steady state of $(X (t): t \geq 0).$} Consider a Lyapunov function $V: \mathbb X \to \mathbb{R}^{+}$ and define the drift of $V$ at a state $i \in \mathbb X$ as $$\Delta V(i) = \sum_{i' \in \mathcal X: i' \neq i} q_{i i'} (V(i') - V(i)),$$ where $q_{ii'}$ is the transition rate from $i$ to $i'.$ Suppose that the drift
satisfies the following conditions:

(i) There exists constants $\gamma > 0$ and $B > 0$ such that $\Delta V (i) \leq -\gamma$ for any $i \in \mathbb X$ with $V(i) > B.$

(ii) $\nu_{\max} := \sup\limits_{i,i'\in \mathbb X: q_{i i'} >0} |V(i') - V(i)|< \infty.$

(iii) $\bar q := \sup\limits_{i \in \mathbb X} (-q_{ii}) < \infty.$

Then for any non-negative integer $j$, we have
$$\Pr\left(V(X) > B + 2 \nu_{\max} j\right) \leq \left(\frac{q_{\max}\nu_{\max}}{q_{\max}\nu_{\max} + \gamma}\right)^{j+1},$$ where $$q_{\max}=\sup\limits_{i \in \mathbb X} \sum_{i' \in \mathbb X: V(i) < V(i')} q_{ii'}.$$ \qed
\end{lemma}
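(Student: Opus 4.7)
The plan is to prove this general geometric tail bound by uniformizing the CTMC into a discrete-time Markov chain with the same stationary distribution, and then applying the classical exponential Lyapunov function argument of \cite{BerGamTsi_01} (Theorem~1). This is precisely the reduction used in Lemma~4.1 of \cite{WanMagSri_18}, which the statement here reproduces.

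\emph{Step 1 (Uniformization).} Since $\bar q < \infty$ by hypothesis (iii), define a DTMC $(Y_n)_{n \geq 0}$ on $\mathbb X$ with transition kernel $P(i, i') = q_{i i'}/\bar q$ for $i' \neq i$ and self-loop $P(i,i) = 1 + q_{i i}/\bar q \geq 0$. Standard uniformization theory realizes $X(t) = Y_{N(t)}$ for an independent rate-$\bar q$ Poisson process $N(t)$, so $(Y_n)$ inherits irreducibility and positive recurrence from $(X(t))$ and has the same stationary distribution; it therefore suffices to prove the stated tail bound for the stationary random variable $Y$ of $(Y_n)$. Under uniformization, hypothesis (i) becomes the DTMC drift condition $E[V(Y_{n+1}) - V(Y_n) \mid Y_n = i] = \Delta V(i)/\bar q \leq -\gamma/\bar q$ whenever $V(i) > B$; hypothesis (ii) persists verbatim; and the definition of $q_{\max}$ gives that the probability $V$ strictly increases in one DTMC step is at most $q_{\max}/\bar q$.

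\emph{Step 2 (Exponential Lyapunov argument).} For a parameter $\theta > 0$ to be optimized, consider $h_\theta(i) = e^{\theta V(i)}$ and use stationarity $E[h_\theta(Y_{n+1})] = E[h_\theta(Y_n)]$. For any $i$ with $V(i) > B$, split the conditional one-step ratio $E[e^{\theta(V(Y_{n+1})-V(Y_n))} \mid Y_n = i]$ over up-jumps (total probability at most $q_{\max}/\bar q$, each increment in $(0, \nu_{\max}]$, bounded by $e^{\theta \nu_{\max}}$) and non-up-jumps (using $e^{\theta x} \leq 1 + \theta x$ for $x \leq 0$), together with the drift bound, to obtain
$$
E\left[e^{\theta(V(Y_{n+1}) - V(Y_n))} \,\Big|\, Y_n = i\right] \;\leq\; 1 + \frac{q_{\max}}{\bar q}\bigl(e^{\theta \nu_{\max}} - 1\bigr) - \frac{\theta\gamma}{\bar q}.
$$
Choosing $\theta > 0$ so the right-hand side is at most one makes $h_\theta$ a supermartingale on $\{V > B\}$. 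Combined with the observation that after one DTMC step from $\{V \leq B\}$ we have $V \leq B + \nu_{\max}$ (so $e^{\theta V} \leq e^{\theta(B+\nu_{\max})}$ there), a standard stationarity / geometric-series argument bounds $E[e^{\theta V(Y)}]$ by a constant multiple of $e^{\theta(B + \nu_{\max})}$. Markov's inequality applied to $e^{\theta V(Y)}$ at level $e^{\theta(B + 2\nu_{\max} j)}$ then produces the claimed geometric-in-$j$ tail bound.

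The main obstacle will be algebraically tuning $\theta$ so that the resulting rate collapses to exactly $\rho = q_{\max}\nu_{\max}/(q_{\max}\nu_{\max} + \gamma)$ as stated, and correctly accounting for the factor-of-two offset $2\nu_{\max} j$ rather than $\nu_{\max} j$: one $\nu_{\max}$ is ``spent'' by the possible overshoot when the chain first enters the high region, while $j$ further width-$\nu_{\max}$ decay stages each contribute a factor $\rho$, together producing the $(j+1)$st power. The optimization of $\theta$ is a standard convex analysis exercise whose outcome matches the stated $\rho$; I would follow the bookkeeping in the proof of Theorem~1 of \cite{BerGamTsi_01} (equivalently, Lemma~4.1 of \cite{WanMagSri_18}) to verify that the minimization produces precisely the ratio stated in the lemma.
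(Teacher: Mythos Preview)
Your plan is correct and matches the paper exactly: the paper does not prove this lemma but quotes it verbatim as Lemma~4.1 of \cite{WanMagSri_18}, explaining that it follows by uniformizing the CTMC and then invoking Theorem~1 of \cite{BerGamTsi_01} for the resulting discrete-time chain---precisely your Steps~1 and~2. Your exponential-Lyapunov sketch is a reasonable rendering of that discrete-time argument, and since you defer to \cite{BerGamTsi_01} for the exact bookkeeping yielding the constant $\rho = q_{\max}\nu_{\max}/(q_{\max}\nu_{\max}+\gamma)$, there is nothing further to compare.
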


For Lyapunov function $V(s)$ in \eqref{ly-func}, it is easy to check 
\begin{align*}
q_{\max}\leq N ~~~\text{and}~~~ v_{\max}\leq \frac{1}{N}.
\end{align*}
We next define 
\begin{align*}
B=\frac{1}{4N^\alpha} ~~~\text{and}~~~ \gamma=\frac{k-1}{b}\frac{\log N}{N^{1-\alpha}}.
\end{align*}
Based on Lemma \ref{TailBound} with $j=\frac{N^{1-\alpha}}{8},$ we have 
\begin{align*}
\Pr\left(V(S)\geq \frac{1}{2N^\alpha}\right) \leq &\left(\frac{1}{1+\frac{k-1}{b}\frac{\log N}{N^{1-\alpha}}}\right)^{\frac{N^{1-\alpha}}{8}}\\
\leq& \left(1 - \frac{k-1}{2b}\frac{\log N}{N^{1-\alpha}}\right)^{\frac{N^{1-\alpha}}{8}}\\ \leq& e^{-\frac{(k-1)\log N}{16b}} = N^{-2r}
\end{align*}
where the first inequality holds because $\frac{N^{1-\alpha}}{32\log N} > r$ implies that $\frac{k-1}{b}\frac{\log N}{N^{1-\alpha}}<1;$ the second inequality holds because $\left(1-\frac{1}{x}\right)^x\leq \frac{1}{e}$ for $x\geq 1;$ and the last inequality holds because $k=32br+1,$ where $\log N$ is needed to establish so that the probability can be made arbitrarily small when $N$ is sufficiently large.  

\section{Proof of Lemma \ref{lemma:ssc-term}}
According to Lemma \ref{ssc:tail}, we  consider SSC term \eqref{G-expansion-SSC} in two regions, $\Omega$ and its complementary $\bar \Omega,$ as follows
\begin{align}
&E\left[N^\alpha\left(\sum_{i=1}^{b} S_i-1 -\frac{\bar k\log N}{N^{1-\alpha}}\right)^r\left(1-S_1\right)\mathbb{I}_{\sum_{i=1}^{b} S_i> \eta+\frac{1}{N}}\right]\nonumber\\
=& E\left[N^\alpha\left(\sum_{i=1}^{b} S_i-1 -\frac{\bar k\log N}{N^{1-\alpha}}\right)^r\left(1-S_1\right)\mathbb{I}_{V(S)\leq \frac{1}{2N^\alpha}}\mathbb{I}_{\sum_{i=1}^{b} S_i> \eta+\frac{1}{N}}\right]\label{SSC-term1}\\
+&E\left[N^\alpha\left(\sum_{i=1}^{b} S_i-1 -\frac{\bar k\log N}{N^{1-\alpha}}\right)^r\left(1-S_1\right)\mathbb{I}_{V(S)>\frac{1}{2N^\alpha}}\mathbb{I}_{\sum_{i=1}^{b} S_i> \eta+\frac{1}{N}}\right].\label{SSC-term2}
\end{align}
The term \eqref{SSC-term1} is related to the case when the system state is in region $\Omega,$ where $V(s) \leq \frac{1}{2N^\alpha}.$ 
Consider $\sum_{i=1}^{b} s_i> \eta+\frac{1}{N}$ (otherwise $\eqref{SSC-term1} = 0$), then we have $V(s) = 1 - s_1.$  In this case, $V(s) \leq \frac{1}{2N^\alpha}$ implies $s_1 \geq 1 - \frac{1}{2N^\alpha}.$ Therefore, we have
$$\eqref{SSC-term1} \leq \frac{1}{2}E\left[\left(\max\left\{\sum_{i=1}^{b} S_i-1 -\frac{\bar k\log N}{\sqrt{N}},0\right\}\right)^r\right].$$
The term \eqref{SSC-term2} consider the case when the system state is in the region $\bar \Omega.$ We apply the tail bound in Lemma \ref{ssc:tail} to get 
$$\eqref{SSC-term2} \leq \frac{b^r}{N^{2r-\alpha}}.$$
Therefore, Lemma \ref{lemma:ssc-term} holds.

\section{Proof of Lemma \ref{lemma:S3}} \label{app:S3}
Define test function to be $f(s) = \sum_{i=3}^b s_i$ in $G f(S).$ We have 
\begin{align}
G f(s) =& \sum_{i=3}^{b}\lambda N (A_{i-1}(s)-A_i(s)) (f(s + e_i) - f(s)) \nonumber\\
&+  N (s_i - s_{i+1}) (f(s - e_i)-f(s)) \nonumber\\
=& \sum_{i=3}^{b}\lambda N (A_{i-1}(s)-A_i(s)) \left(\frac{1}{N}\right) +  N (s_i - s_{i+1}) \left(-\frac{1}{N}\right) \nonumber\\
=& \lambda \left(A_{2}(s)-A_b(s) \right) -s_3. \nonumber
\end{align} 

According to the steady-state condition $E[ G f(S) ] = 0,$ we have
$$E[S_3] = \lambda E[A_2(S) - A_b(S)].$$

\end{document}